\numberwithin{equation}{section}
\theoremstyle{definition}
\theoremstyle{theorem}
\newtheorem{theorem}{Theorem}[section]
\newcommand{\tcr}[1]{{\color{red} #1}}   
\begin{document}

\title{Mathematical Model and Optimal Control of the Transmission Dynamics of Avian Spirochaetosis (TICK Fever)}

\author{Joy I. Uwakwe$^{1}$, Simeon C. Inyama$^{2}$, Blessing O. Emerenini$^{3}$\\
$^{1}$ Department of Mathematics, Alvan Ikokwu Fed. Col. of Edu., Owerri, Imo State, Nigeria\\
$^{2}$ Department of Mathematics, Fed. Uni. of Technology Owerri, Imo State, Nigeria\\
$^{3}$ Postdoc fellow (2016-2017) Department of Physics, Ryerson University, Toronto, Ontario, Canada 
}
\date{}

\maketitle
\linenumbers

\begin{abstract}
Avian Spirochaetosis is an acute endemic tick-borne disease of birds, caused by Borrelia anserins, a species of Borrelia bacteria. In this paper, we present a compartmental Mathematical model of the disease for the bird population and Tick population. The model so constructed was analyzed using methods from dynamical systems theory. \tcr{The disease steady (equilibrium) state was determined and the conditions for the disease-free steady state to be stable were determined}.  The analysis showed that the disease-free steady state is locally stable if $d\geq \tau_B$ and $\delta \geq \tau_T$, that is, the natural death rate of birds (d) will be greater than the per capita birth rate of birds $\tau_B$ and the death rate of tick $\delta)$ is greater than  the per capita birth rate of tick $\tau_T$. This means that for the disease to be under control and eradicated within a while from its outbreak, the natural death rate of birds $d$ will be greater than the per capita birth rate of bird $\tau_B$ and the death rate of tick $\delta$ is greater than the per capita birth rate of tick $\tau_T$. It was also proved the disease-free equilibrium (DFE) and the endemic equilibrium (EE) are globally stable using Lyaponov method. Three control measures were introduced into the model. The optimality system of the three controls is characterized using optimal control theory and the existence and uniqueness of the optimal control are established. Then, the effect of the incorporation of the three controls is investigated by performing numerical simulation.

\end{abstract}

Key Words: Avian Spirochaetosis,Tick Fever, Mathematical Model, Control Measure, Transmission Dynamics

\section{Introduction}
Avian spirochaetosis is an acute endemic tick-borne disease of birds, caused by Borrelia anserins, a species of Borrelia bacteria. It affects a variety of avian species including chickens, turkeys, ducks, geese as well as game birds. Ticks are considered as the most important vector of disease-causing pathogens in domestic and wild animals. In many countries, avian spirochaetosis has been reported to be one of the most severe diseases affecting poultry industry. In addition to the historical importance of avian spirochaetosis, the pathogenic agent is prevalent worldwide \cite{Raquel,Ataliba}.

The parasite is spread by a soft tick (Argas persicus) of the several species of Argas ticks (fowl Tick family). The spirochete may be found in the blood of infected birds during the beginning stages of the disease process. The ticks hide during the day in cracks and crevices, suck the fowl’s blood at night and introduce the fever producing parasite Borrelia anserins. Ticks inoculate spirochetes by excretion of coxal fluid or by saliva when feeding on the birds \cite{Kaikabo}. The Ticks transmit the infection transovarially and through non-viraemic transmission \cite{Merck}. Birds transmit the disease amongst themselves through infected faeces or by contact with infected equipment \cite{Fox}. Outbreak of the disease tends to occur during the peak tick activity, during warm, humid conditions. Clinically, the disease is expressed by drowsiness, anorxia, inappetence, greenish diarrhea, hyperthermia, paralysis of the legs and wings as well as sudden death of birds. Several antibiotics agents like penicillin, tetracycline and tylosin have been seen to be very effective in treating the infected birds \cite{Borrelin}. Birds normally have protective immunity after recovering from natural infection.
While many experimental and field study of infectious disease spread and transmission, there is still great need for more insight into the epidemiology of infectious disease, and design of control strategies. 

Mathematical modeling has become an important tool which can be used to guide the identification of critical intervention points aimed at minimizing disease related mortality. Several mathematical studies have been made in the area of tick-borne disease with findings on potential management strategies \cite{Porter}, control effort for treatment of host and prevention of host-vector contact with minimal cost and side effect \cite{Hee-Dae}. Other related models have placed more emphasis on non-viraemic transmission \cite{Rosa}, relationship between vectors and their host and its correlation to tick-borne encephalitis infections in the region. Majority of the models are based on the deterministic SIR type model consisting of coupled ordinary differential equations \cite{Rosa, Gilbert}, SEIR models \cite{Liuyong}. Other models have used optimal control theory to obtain an optimal vaccination strategy using critical threshold values of vaccine coverage ratio.

In this study, we develop a mathematical modeling framework that incorporates differences in a given population based on susceptibility, exposure and recovery. Specifically, we apply optimal control to the transmission dynamics of avian spirchaetosis disease in poultry birds. In addition to the model formulation, we address the question of existence of steady states and stability of disease free equilibrium through the mathematical analysis and numerical simulations. Our goal is to determine optimal strategy model for the prevention and treatment of avian spirochoetosis in order to reduce incidence rate in poultry.


\section{Method}
\subsection{Basic Model Assumptions}
We formulate a mathematical model that describes the transmission dynamics of the Avian Spirochaetosis within a poultry population. We assume there exists transovarial transmission among ticks (transmission from adult female tick to egg/larvae); there is non-viraemic transmission amongst the tick (that is, susceptible ticks can be infected through co-feeding with an infected tick). We also assume that recovered birds develop permanent immunity to the disease and there is no recovery for infected ticks since ticks have a short life span.

\subsection{Governing equations: Avian Spirochaetosis Model and the Control Model}
Using the assumptions, the model describing the transmission dynamics of Avian Spirochaetosis as described in the compartmental diagram [\ref{flowdiagram}] is formulated as a system of ordinary differential equations (ODEs). The dependent variables are $S_B, E_B, I_B, S_T, E_T, I_T,R$ which at time $t$ represents the susceptible bird population, exposed bird population, infectious bird population, susceptible tick population, exposed tick population, infectious tick population and recovered birds respectively. We denote the total bird population by $N_B$ and total tick population by $N_T$ such that $N_B = S_B+E_B+I_B+R$ and $N_T = S_T+E_T+I_T$ respectively. The governing equations for the seven compartments are presented in (\ref{avian1}).

We also present in (\ref{control1}) the optimal control strategies for the disease model with transovarial and non-viraemic transmission. The system (\ref{avian1}) is modified by the inclusion of three control variables which are introduced in effort to reduce (i) the number of latently infected birds (through bio-security measures), (ii) the cost of treatment of infected birds and (iii) cost of eliminating the tick. The description, values and sources of the models parameters are summarized in table \ref{tablea}.

\begin{table}[ht]
\centering
\begin{tabular}{p{8cm}|p{9cm}}
 \hline\\
\textbf{Avian Spirochaetosis Model} & \textbf{Control Model} \\
\begin{equation}
 \begin{array}{l}
\frac{dS_B}{dt} = \tau_BN_B - \beta_1I_TS_B - \beta_2I_BS_B - dS_B\,\\
\\
\frac{dE_B}{dt} = \beta_1I_TS_B + \beta_2I_BS_B - \alpha_BE_B -dE_B\,\\
\\
\frac{dI_B}{dt} = \alpha_BE_B - \sigma I_B -dI_b- \mu I_B\,\\
\\
\frac{dR}{dt} = \sigma I_B - dR\,\\
\\
\frac{dS_T}{dt} = \tau_TN_T - \beta_3I_BS_T - \theta I_TS_T - \lambda S_TI_T - \delta S_T\,\\
\\
\frac{dE_T}{dt} = \beta_3I_BS_T + \theta I_TS_T + \lambda S_TI_T - \delta E_T + \alpha_TE_T\,\\
\\
\frac{dI_T}{dt} = \alpha_TE_T - \delta I_T\,
\end{array} \label{avian1}
\end{equation}
&
\begin{equation}
\begin{array}{l}
\frac{dS_B}{dt} = \tau_BN_B - (1-u_1)\beta_1I_TS_B - (1-u_2)\beta_2I_BS_B - dS_B\,\\
\\
\frac{dE_B}{dt} = (1-u_1)\beta_1I_TS_B + (1-u_2)\beta_2I_BS_B - \alpha_BE_B -dE_B\,\\
\\
\frac{dI_B}{dt} = \alpha_BE_B - \alpha_BI_B - \mu I_B\,\\
\\
\frac{dR}{dt} = u_2 I_B - dR\,\\
\\
\frac{dS_T}{dt} = \tau_TN_T(1-u_3) - \beta_3I_BS_T - \theta I_TS_T - \lambda S_TI_T - \delta S_T\,\\
\\
\frac{dE_T}{dt} = \beta_3I_BS_T + \theta I_TS_T + \lambda S_TI_T - \delta E_T + \alpha_TE_T\,\\
\\
\frac{dI_T}{dt} = \alpha_TE_T - \delta I_T\,
\end{array}\label{control1}
\end{equation}\\
with initial conditions:
\begin{equation}
\left.\begin{array}{l}
S_B(0) = S_{B0}\,
\\
E_B(0) = E_{B0}\, 
\\
I_B(0) = I_{B0}\, 
\\
R(0) = R_{0}\, 
\\
S_T(0) = S_{T0}\, 
\\
I_T(0) = I_{T0}
\end{array}\right\}\label{initialcond}
\end{equation}
\end{tabular}
\end{table}
%

\begin{figure}[h!]
\begin{center}
\resizebox{0.65\linewidth}{!}{\includegraphics{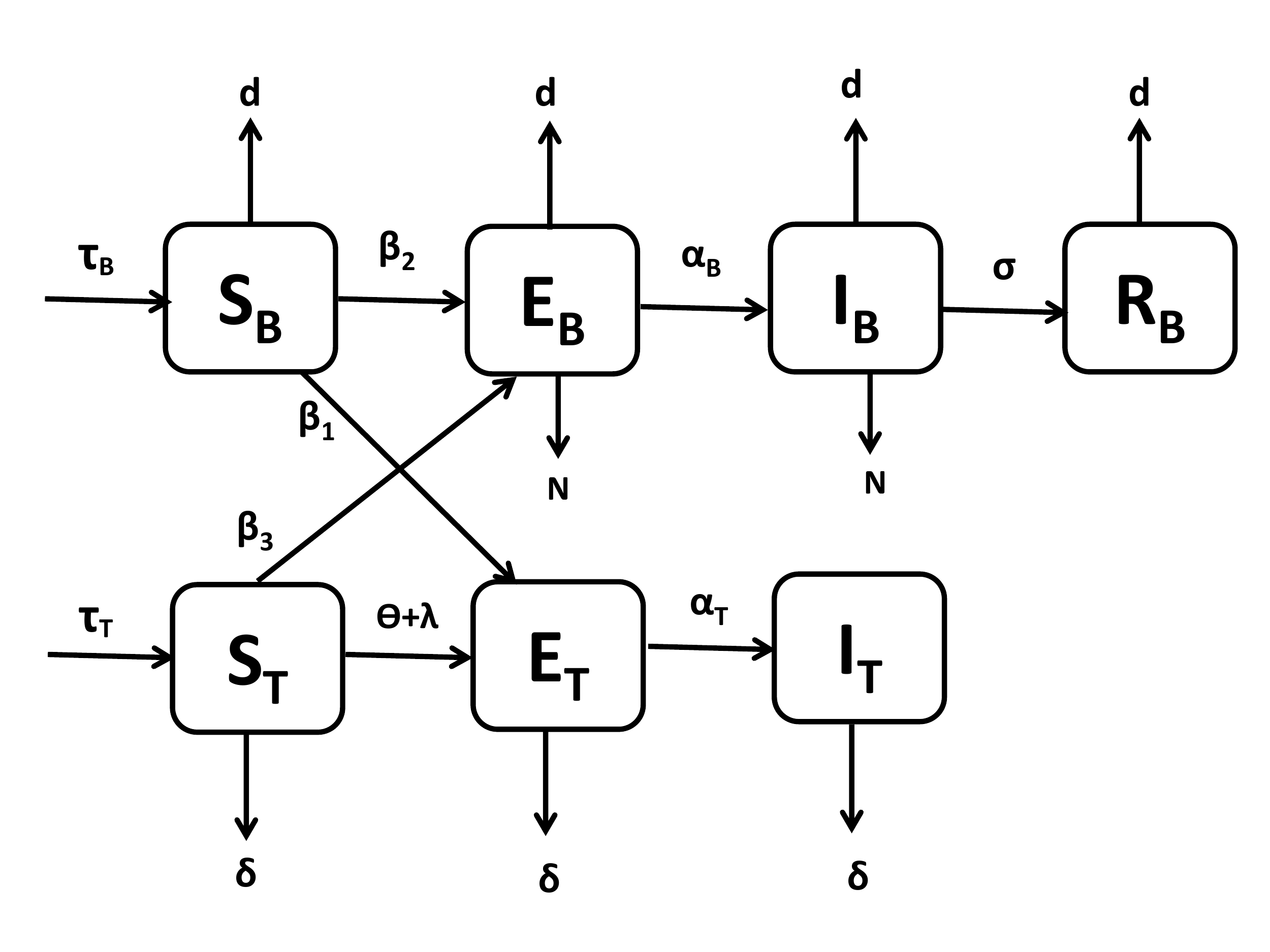}}\hspace{4mm}
\end{center}
\caption{Flow diagram for the ODE model with classes $S_B, E_B, I_B, S_T, E_T, I_T,R$. The solid lines denote transitions between classes and death rates in the model. }\label{flowdiagram}
\end{figure}

\begin{table*}[h!]
\begin{center}
\caption{Parameters for simulation presented in Figures 1,2,3}
\begin{tabular}{ l l l l }\hline\hline
Parameter & Description & Value & Source  \\\hline\hline 
	$N_B$	&	Size of total bird population & $50$ &    \\
	$N_T$	&	Size of total tick population & $100$  &   \\
	$S_B(0)$	&	Susceptible bird population at time t & $100$  & Assumed  \\
	$S_T(0)$	&	Susceptible tick population at time t & $100$  & Assumed   \\
	$E_B(0)$	&	Exposed bird population at time t & $80$ & Assumed  \\
	$E_T(0)$	&	Exposed tick population at time t	 & $80$ & Assumed  \\
	$I_B(0)$	&	Infectious bird population at time t & $80$  & Assumed  \\
	$I_T(0)$	&	Infectious tick population at time t & $80$  & Assumed  \\
	$R(0)$	&	Recovered birds at time t & $60$ & Assumed  \\
	$\lambda$	&   The rate an infected adult female tick reproduces & $3.68*(10^-4)$ &   \\
	$\beta_1$	&	The rate at which a tick bites and infects a bird  &$2*(10^-4)$  &   \\
	$\beta_2$	&	The rate at which birds are infected through ingested faeces  & $0.05$ &  \\
	$\beta_3$	&	The rate a tick bites a bird and become infected  & $1.95*(10^-3)$ &  \\
	$\theta$	&	The rate of non – viraemic transmission between co – feeding ticks  & $3.9*(10^-7)$  &  \\
	$\alpha_B$	&	rate of progression from exposed to infectious class among the bird  & $0.182$   &  \\
    $\alpha_T$  &   rate of progression from exposed to infectious class among the tick &$0.182$   &  \\
	$d$	&	Natural death rate of birds & $0.087$ &   \\
	$\mu$	&	Disease induced death rate of birds & $0.2$ &   \\
	$\sigma$	&	Rate of recovery for birds & $1.25$  &  \\
	$\delta$	&	Death rate of tick  & $0.083$  &  \\
	$u_1$	&	effort to reduce the number of exposed birds & $0.02$  &   \\
    $u_2$	&	measures the rate of treatment of the infected birds & $0.01$  &   \\
	$u_3$	&	effective tick control measure & $0.05$  &   \\
   $\tau_B$     &   Per capita birth rate of bird  & $8.33$ &   \\
   $\tau_T$     &   Per capita birth rate of tick & $0.167$  &   \\\hline\hline 
\end{tabular}\label{tablea}
\end{center}
\end{table*}

\section{Analysis of the Model}
\begin{theorem}\label{t3.1}
All feasible solutions of the model (\ref{avian1}) are uniformly bounded in a proper subset.

$\phi = \phi_B \times \phi_T$ where $\phi_B = \{(S_B,E_B,I_B,R):N_B\leq  \frac{\tau_B}{d}\}$ and $\phi_T = \{(S_T,E_T,I_T):N_T\leq \frac{\tau_{T}}{\delta}\}$
\end{theorem}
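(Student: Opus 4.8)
The natural approach is to collapse the seven-dimensional system to two scalar comparison problems, one for the total bird population $N_B = S_B + E_B + I_B + R$ and one for the total tick population $N_T = S_T + E_T + I_T$, and then integrate each with an integrating factor.

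First I would establish that the non-negative orthant is forward invariant: with the initial data (\ref{initialcond}) non-negative, each equation of (\ref{avian1}) has the form $\dot x_i = (\text{non-negative input}) - x_i\, g_i(\cdot)$ with $g_i$ locally bounded, so on the face $\{x_i = 0\}$ one has $\dot x_i \ge 0$ and no trajectory leaves through it. This positivity is precisely what licenses discarding loss terms in the step below.

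Next, add the four bird equations of (\ref{avian1}). The internal transfer terms telescope in pairs — the infection terms $\beta_1 I_T S_B$ and $\beta_2 I_B S_B$, the progression term $\alpha_B E_B$, and the recovery term $\sigma I_B$ each occur with both signs — leaving $\dot N_B = \tau_B N_B - d N_B - \mu I_B$. Taking $\tau_B$ to play the role of the (constant) recruitment input, which is the reading consistent with the stated bound, and using $\mu I_B \ge 0$ gives the linear differential inequality $\dot N_B \le \tau_B - d N_B$; multiplying by $e^{dt}$ and integrating yields
\[
N_B(t) \le \frac{\tau_B}{d} + \Bigl(N_B(0) - \frac{\tau_B}{d}\Bigr)e^{-dt},
\]
so $\limsup_{t\to\infty} N_B(t) \le \tau_B/d$ and, if $N_B(0)\le \tau_B/d$, then $N_B(t)\le \tau_B/d$ for all $t\ge 0$; i.e.\ $\phi_B$ is positively invariant and globally attracting. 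The tick subsystem is handled identically: summing the three tick equations cancels the $\beta_3 I_B S_T$, $\theta I_T S_T$, $\lambda S_T I_T$ terms together with the $\alpha_T E_T$ transfer, giving $\dot N_T \le \tau_T - \delta N_T$ and hence $N_T(t)\le \tau_T/\delta + (N_T(0)-\tau_T/\delta)e^{-\delta t}$, so $\phi_T$ is positively invariant and attracting. Combining the two, $\phi = \phi_B\times\phi_T$ is positively invariant and absorbing, and boundedness of every individual compartment follows from non-negativity together with $S_B,E_B,I_B,R\le N_B$ and $S_T,E_T,I_T\le N_T$.

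The comparison/integrating-factor estimate is routine; the step that really needs care is the bookkeeping in the sums $\dot N_B$ and $\dot N_T$ — checking that every internal transfer cancels and isolating the net recruitment-minus-death structure with the correct constants, since the recruitment in (\ref{avian1}) is written as a per-capita term and the bounds $\tau_B/d$, $\tau_T/\delta$ are the ones associated with the constant-input reading. I would state the positivity lemma explicitly first, because it is exactly what justifies dropping $\mu I_B$ (and the analogous tick losses) when passing from the equality for $\dot N_B$ to the differential inequality.
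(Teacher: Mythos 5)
Your proposal follows essentially the same route as the paper: sum the bird and tick equations so the internal transfer terms cancel, read the recruitment as the constant input $\tau_B$ (respectively $\tau_T$), drop the non-negative loss term $\mu I_B$, and compare with the linear equations $\dot N_B \le \tau_B - dN_B$, $\dot N_T \le \tau_T - \delta N_T$ to get the limits $\tau_B/d$ and $\tau_T/\delta$. Your version is in fact slightly tidier than the paper's, since you state the positivity of the orthant explicitly (which justifies discarding $\mu I_B$) and you write the solution of the comparison equation with the correct integration constant $\bigl(N_B(0)-\tau_B/d\bigr)e^{-dt}$, whereas the paper's displayed bound garbles this term.
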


\begin{proof}
We assume the associated parameters of the model (\ref{avian1}) are non – negative for all time $t>0$.
To show that all feasible solutions are uniformly bounded in a proper subset, we consider the bird and tick populations respectively i.e. $N_B= S_B + E_B + I_B + R$ and $N_T = S_T + E_T + I_T$.
Let $(S_B,E_B,I_B,R) \in R^4_+$ and $(S_T, E_T, I_T)\in R^3_+$ be any solution with non-negative initial conditions.
By differential inequality, it follows that,
\begin{equation}
\begin{split}
 \lim_{t\rightarrow\infty} \sup S_B(t) & \leq \frac{\tau_B}{d}\\
\lim_{t\rightarrow\infty} \sup S_T(t) & \leq \frac{\tau_B}{\delta}
\end{split}
\end{equation}

where $\tau_B=\tau_BN_B$ and $\tau_T=\tau_TN_T$

Taking the time derivative of $N_B$ and $N_T$ along a solution path of the model (\ref{avian1}) gives,
\begin{equation}
\begin{split}
\frac{dN_B}{dt}& = \tau_B - dN_B - \mu_BI_B\\
\frac{dN_T}{dt} & = \tau_T - \delta N_T\\
\end{split}
\end{equation}

Then,

\begin{equation}
\begin{split}
\frac{dN_B}{dt}& \leq \tau_B - dN_B\\
\frac{dN_T}{dt} & \leq \tau_T - \delta N_T\\
\end{split}
\end{equation}

and
\begin{equation}
\begin{split}
0\leq N_B & \leq \frac{\tau_B}{d} + dN_B(0)e^{-d t}\\
0\leq N_T & \leq \frac{\tau_T}{\delta} - \delta N_T(0)e^{-\delta t}\\
\end{split}
\end{equation}

where $N_B(0)$ and $N_T(0)$ are the initial values of the respective variables in each population. Thus as $t\rightarrow\infty$, then
$$0\leq N_B\leq\frac{\tau_B}{d} \ \text{and} \ 0\leq N_T\leq\frac{\tau_T}{\delta}$$

Hence these shows that $N_B$ and $N_T$ are bounded and all feasible solutions of $S_B$, $E_B$, $I_B$ ,$R$, $S_T$, $E_T$ and $I_T$ starting in the regions $\phi_B$  and $\phi_T$ will either approach, enter or stay in the region where 

$$\phi_B = \{(S_B,E_B,I_B,R):N_B\leq\frac{\tau_B}{d}\} \ \text{and} \ \phi_B = \{(S_T,E_T,I_T):N_T\leq\frac{\tau_T}{\delta}\}$$ 

\end{proof}

Therefore $N_B$ and $N_T$ are bounded and all the possible solutions of the model (\ref{avian1}) approach or stay in region $\phi=\phi_B\times\phi_B \forall t \geq 0$. Thus $\phi$ is positively invariant and the existence, uniqueness and continuity results also hold for the model (\ref{avian1}) in $\phi$. Hence the model is well-posed mathematically and epidemiologically.  

\subsection{Existence of steady states}
The system is in a steady state if, $\frac{d S_B}{dt}=\frac{d E_B}{dt}=\frac{d I_B}{dt}=\frac{d R}{dt}=\frac{d S_T}{dt}=\frac{d E_T}{dt}=\frac{d I_T}{dt}= 0$, that is, 

\begin{equation}
\begin{split}
\tau_BN_B - \beta_1I_TS_B - \beta_2I_BS_B - dS_B &=0 \\
\beta_1I_TS_B - \beta_2I_BS_B - \alpha_BE_B - dE_B &=0  \\
\alpha_BE_B  - \sigma I_B - dI_B - \mu I_B &=0  \\
\sigma I_B - dR &=0  \\
\tau_TN_T - \beta_3I_BS_T - \lambda S_TI_T - \delta S_T &=0  \\
\beta_3I_BS_T - \theta I_TS_T - \delta E_T - \alpha_TE_T &=0  \\
\alpha_TE_T - \delta I_T &=0 
\end{split}\label{steadystate}
\end{equation}

Solving equations (\ref{steadystate}) for $S^0_B,E^0_B,I^0_T,R^0_B,S^0_T,E^0_T,I^0_T$ we have the steady (equilibrium) state as follows:

\begin{equation}
\begin{split}
S_B^0 & = \frac{{{\tau _B}N_B^0\,}}{{({\beta _1}I_T^0 + {\beta _2}I_B^0 + d)}}\,\\
E_B^0 & = \frac{{(\sigma  + d + \mu )I_B^0}}{{{\alpha _B}}}\,\\
\,I_B^0 & = \frac{{{\alpha _B}{\beta _1}I_T^0S_B^0}}{{\left[ {({\alpha _B} + d)(\sigma  + d + \mu ) - {\alpha _B}{\beta _2}} \right]}}\,\,\\
R^0 & = \frac{{\sigma I_B^0}}{d}\\
S_T^0 & = \frac{{{\tau _T}N_T^0}}{{({\beta _3}I_B^0 + \theta I_T^0 + \lambda I_T^0 + \delta )}}\\
E_T^0 & = \frac{{{\tau _T}N_T^0({\beta _3}I_B^0 + \theta I_T^0 + \lambda I_T^0)}}{{({\beta _3}I_B^0 + \theta I_T^0 + \lambda I_T^0 + \delta )(\delta  - {\alpha _T})}}\\
\,I_T^0 & = \frac{{{\alpha _T}{E_T}}}{\delta }\,
\end{split}
\end{equation}

The disease free steady (equilibrium) state for the disease is ${E_0} = (\frac{{{\tau _B}\mathop N\nolimits_B }}{d},0,0,0,\frac{{{\tau _T}\mathop N\nolimits_T }}{\delta },0,0)$


\subsection{Local Stability of Disease-free (DFE) Steady (equilibrium) state}
Linearizing the system (\ref{avian1}) we have the Jacobian matrix as 
\begin{equation}
{J_{DF}} = \left( {\begin{array}{*{20}{c}}
a&{{\tau _B}}&{{\tau _B} - {\beta _2}S_B^0}&{{\tau _B}}&0&0&{ - {\beta _1}S_B^0}\\
b&{ - {\alpha _B} - d}&{{\beta _2}S_B^0}&0&0&0&{{\beta _1}{S_B}}\\
0&{{\alpha _B}}&{ - (\sigma  + d + \mu )}&0&0&0&0\\
0&0&\sigma &{ - d}&0&0&0\\
0&0&{ - {\beta _3}S_T^0}&0&c&0&e\\
0&0&{{\beta _3}S_T^0}&0&f&{ - (\delta  + {\alpha _T})}&{\theta S_T^0 + \lambda S_T^0}\\
0&0&0&0&0&{{\alpha _T}}&{ - \delta }
\end{array}} \right)
\end{equation}
where
$a = {\tau _B} - {\beta _1}I_T^0 - {\beta _2}I_B^0 - d$, \ $b = {\beta _1}I_T^0 + {\beta _2}I_B^0$, $c = {\tau _T} - {\beta _3}I_B^0 - \theta I_T^0 - \lambda I_T^0 - \delta $, $e = {\tau _T} - \theta S_T^0 - \lambda S_T^0$ \ and \ 
$f = {\beta _3}I_B^0 + \theta I_T^0 + \lambda I_T^0$

\begin{theorem}
The disease-free steady (equilibrium) state of the model (\ref{avian1}) is locally asymptotically stable if $d \ge {\tau _B}\,\,\,and\,\,\,\,\delta  \ge {\tau _T}$.
\end{theorem}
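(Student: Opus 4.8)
The plan is to evaluate the displayed Jacobian $J_{DF}$ at the disease-free equilibrium $E_0 = \bigl(\tfrac{\tau_B N_B}{d},0,0,0,\tfrac{\tau_T N_T}{\delta},0,0\bigr)$ and exploit its near-triangular structure. At $E_0$ one has $I_B^0 = I_T^0 = E_B^0 = E_T^0 = R^0 = 0$, so the auxiliary entries reduce to $a = \tau_B - d$, $b = 0$, $c = \tau_T - \delta$, $f = 0$, while $S_B^0 = \tau_B N_B/d$ and $S_T^0 = \tau_T N_T/\delta$. Because $b = 0$, the $S_B$-column of $J_{DF}|_{E_0}$ has a single nonzero entry and $x = \tau_B - d$ peels off as an eigenvalue; deleting that row and column, the $R$-column again has the single nonzero entry $-d$, giving the eigenvalue $x = -d$; and since $f = 0$, the $S_T$-row decouples from the $\{E_B, I_B, E_T, I_T\}$ variables in the remaining $5\times5$ block, so $x = \tau_T - \delta$ also peels off.

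What is left is the $4\times4$ block, in the ordering $(E_B, I_B, E_T, I_T)$,
\begin{equation*}
M = \begin{pmatrix}
-(\alpha_B+d) & \beta_2 S_B^0 & 0 & \beta_1 S_B^0\\
\alpha_B & -(\sigma+d+\mu) & 0 & 0\\
0 & \beta_3 S_T^0 & -(\delta+\alpha_T) & \theta S_T^0+\lambda S_T^0\\
0 & 0 & \alpha_T & -\delta
\end{pmatrix}.
\end{equation*}
I would expand $\det(xI - M)$ along its first column to obtain the characteristic polynomial $p(x) = x^4 + a_1 x^3 + a_2 x^2 + a_3 x + a_4$, check that $a_1,a_2,a_3,a_4 > 0$ — the trace $-a_1 = -(\alpha_B+d)-(\sigma+d+\mu)-(\delta+\alpha_T)-\delta$ is manifestly negative, and the remaining coefficients are sums of products of the positive removal rates minus the transmission cross-terms ($\beta_1\beta_3$, $\alpha_T(\theta+\lambda)$, etc.) — and then verify the Routh--Hurwitz conditions $a_1 a_2 > a_3$ and $a_1 a_2 a_3 > a_3^2 + a_1^2 a_4$. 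Together with the three explicit eigenvalues this gives asymptotic stability: $x = -d < 0$ always, $x = \tau_B - d < 0$ precisely when $d > \tau_B$, and $x = \tau_T - \delta < 0$ precisely when $\delta > \tau_T$, which is the content of the stated inequalities (strict inequality is what is actually needed for the ``asymptotic'' conclusion; equality leaves a zero eigenvalue).

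The main obstacle is the $4\times4$ block: its determinant $a_4$ and the second Hurwitz determinant are exactly where the transmission parameters enter, and in general they impose a reproduction-number threshold rather than merely $d\ge\tau_B$, $\delta\ge\tau_T$. To close this under only the demographic hypotheses I would use the bounds $S_B^0 \le \tau_B/d$ and $S_T^0 \le \tau_T/\delta$ from Theorem~\ref{t3.1} to argue that the removal rates on the diagonal of $M$ dominate the off-diagonal transmission contributions, so that $a_4>0$ and the Hurwitz inequalities hold; alternatively, recasting $M$ as $F - V$ in next-generation form reduces the question to the spectral-radius bound $\rho(FV^{-1}) < 1$, which is the cleaner route and makes transparent the role of the two inequalities. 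The three decoupled eigenvalues are the routine part; the quartic block is where the genuine work (or a tacit additional hypothesis) resides.
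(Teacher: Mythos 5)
Your reduction of the problem is the right one: at $E_0$ the entries $b=\beta_1 I_T^0+\beta_2 I_B^0$ and $f=\beta_3 I_B^0+\theta I_T^0+\lambda I_T^0$ vanish, so the eigenvalues $\tau_B-d$, $-d$ and $\tau_T-\delta$ peel off and the rest of the spectrum is that of your $4\times4$ infected block $M$ in $(E_B,I_B,E_T,I_T)$. But the proposal does not prove the theorem, and the patch you sketch cannot close it: the bound $S_B^0\le\tau_B/d$ (or $S_T^0\le\tau_T/\delta$) does nothing to control the transmission entries against the removal rates, because $\beta_2 S_B^0$ and $(\theta+\lambda)S_T^0$ scale with $\beta_2$, $\theta$, $\lambda$, which are unconstrained by $d\ge\tau_B$ and $\delta\ge\tau_T$. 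Concretely, the $(E_B,I_B)$ sub-block of $M$ has determinant $(\alpha_B+d)(\sigma+d+\mu)-\alpha_B\beta_2 S_B^0$ and the $(E_T,I_T)$ sub-block has determinant $\delta(\delta+\alpha_T)-\alpha_T(\theta+\lambda)S_T^0$; either can be made negative (hence $M$ acquires an eigenvalue with positive real part) while $d>\tau_B$ and $\delta>\tau_T$ hold. So your Routh--Hurwitz coefficients $a_3,a_4$ and the Hurwitz determinants are genuinely equivalent to a reproduction-number condition (essentially the $R_B<1$, $R_T<1$ ingredients of the paper's later $R_0$), not to the demographic inequalities in the statement; your closing admission that a ``tacit additional hypothesis'' is needed is exactly right, and note also that equality $d=\tau_B$ or $\delta=\tau_T$ leaves a zero eigenvalue, so only strict inequalities could ever give asymptotic stability.

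The paper's own proof takes a much cruder route and never meets this obstacle: it simply declares the eigenvalues of $J_{DF_0}$ to be its diagonal entries $a$, $-(\alpha_B+d)$, $-(\sigma+d+\mu)$, $-d$, $c$, $-(\delta+\alpha_T)$, $-\delta$, i.e.\ it treats the matrix as triangular, and in the displayed $J_{DF_0}$ the couplings $\beta_3 S_T^0$ and $(\theta+\lambda)S_T^0$ are silently replaced by zero even though $S_T^0=\tau_T N_T/\delta\neq0$ at the DFE; the couplings $\alpha_B$ with $\beta_2 S_B^0$ and $\beta_1 S_B^0$ are likewise ignored. The only sign conditions it then needs are $a<0$ and $c<0$, which yields the stated $d>\tau_B$, $\delta>\tau_T$. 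So your decomposition is the more careful one and it exposes a real gap in the paper's argument rather than reproducing it; but as a proof of the theorem as stated your proposal is also incomplete, and the correct conclusion along your route is that local asymptotic stability of the DFE requires an $R_0<1$-type threshold (consistent with the paper's subsequent global-stability theorem), with $d>\tau_B$ and $\delta>\tau_T$ controlling only the two demographic eigenvalues.
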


\begin{proof}
Evaluating $J_{DF}$   at $E_0$ we have, 

\begin{equation}
{J_{D{F_0}}} = \left( {\begin{array}{*{20}{c}}
a&{{\tau _B}}&{{\tau _B} - {\beta _2}\frac{{{\tau _B}}}{d}}&{{\tau _B}}&0&0&{\frac{{ - {\beta _1}{\tau _B}}}{d}}\\
b&{ - {\alpha _B} - d}&{{\beta _2}\frac{{{\tau _B}}}{d}}&0&0&0&{\frac{{{\beta _1}{\tau _B}}}{d}}\\
0&{{\alpha _B}}&{ - (\sigma  + d + \mu )}&0&0&0&0\\
0&0&\sigma &{ - d}&0&0&0\\
0&0&0&0&c&0&e\\
0&0&0&0&f&{ - (\delta  + {\alpha _T})}&0\\
0&0&0&0&0&{{\alpha _T}}&{ - \delta }
\end{array}} \right)
\end{equation}

\[\left| {{J_{D{F_0}}} - \lambda I} \right| = \left| {\begin{array}{*{20}{c}}
{a - \lambda }&{{\tau _B}}&{{\tau _B} - {\beta _2}\frac{{{\tau _B}}}{d}}&{{\tau _B}}&0&0&{\frac{{ - {\beta _1}{\tau _B}}}{d}}\\
b&{ - {\alpha _B} - d - \lambda }&{{\beta _2}\frac{{{\tau _B}}}{d}}&0&0&0&{\frac{{{\beta _1}{\tau _B}}}{d}}\\
0&{{\alpha _B}}&{ - (\sigma  + d + \mu ) - \lambda }&0&0&0&0\\
0&0&\sigma &{ - d - \lambda }&0&0&0\\
0&0&0&0&{c - \lambda }&0&e\\
0&0&0&0&f&{ - (\delta  + {\alpha _T}) - \lambda }&0\\
0&0&0&0&0&{{\alpha _T}}&{ - \delta  - \lambda }
\end{array}} \right|\]
The eigen-value of the system are: $a,\,\, - {\alpha _B} - d,\,\, - (\sigma  + d + \mu ),\,\, - d,\,\,c,\,\, - (\delta  + {\alpha _T}),\,\, - \delta \,\,$. Hence the disease free steady state is asymptotically stable only if 
	\[a = {\tau _B} - d < 0\,\, \Rightarrow \,{\tau _B} < d\, \Rightarrow d > {\tau _B}\] and
	\[c = {\tau _T} - \delta  < 0\,\, \Rightarrow {\tau _T} < \delta \,\, \Rightarrow \delta \, > {\tau _T}\]
Thus, for the disease-free steady state to be stable, $d > {\tau _B}$ and $\delta \, > {\tau _T}$, in other words, the natural death rate of birds $d$ will be greater than the per capita birth rate of bird ${\tau _B}$ and the death rate of tick $\delta$ is greater than the per capita birth rate of tick ${\tau _T}$. 
\end{proof}
This means that for the disease to be under control and eradicated within a while from its outbreak, the natural death rate of birds $d$ will be greater than the per capita birth rate of bird ${\tau _B}$ and the death rate of tick $\delta$ is greater than the per capita birth rate of tick ${\tau _T}$.

\subsection{Local Stability of Endemic Steady (equilibrium) State}	
In this section we investigate the stability of the endemic state  $E_E^* = (0,\,E_B^0,\,I_B^0,\,0,\,0,\,E_T^0,\,I_T^0)$ where $E_B^0,\,I_B^0,\,E_T^0,\,I_T^0$ are as defined in (\ref{eq3.2}) below: Note that  $S_B^0 = \,R^0 = \,S_T^0 = 0$
\begin{equation}
\begin{split}
E_B^0 & = \frac{(\sigma  + d + \mu)I_B^0}{\alpha _B}\\
I_B^0 & = \frac{\alpha _B\beta _1 I_T^0S_B^0}{(\alpha _B + d)(\sigma  + d + \mu ) - \alpha _B\beta _2}\\
E_T^0 & = \frac{{{\tau _T}N_T^0({\beta _3}I_B^0 + \theta I_T^0 + \lambda I_T^0}}{{{\beta _3}I_B^0 + \theta I_T^0 + \lambda I_T^0 + \delta )(\delta  - {\alpha _T})}}\\
I_T^0 & = \frac{{{\alpha _T}{E_T}}}{\delta }
\end{split}\label{eq3.2}
\end{equation}

\begin{theorem}
The endemic steady (equilibrium) state is locally asymptotically stable if $E_T^0 > \frac{{\delta ({\tau _B} - d)}}{{{\beta _1}{\alpha _T}}}$ or if $E_T^0 > \left( {\frac{\delta }{{{\alpha _T}}}} \right)\left( {\frac{{\left( {{\tau _T} - \delta } \right)}}{{(\theta  + \lambda )}}} \right)$\\\\
\end{theorem}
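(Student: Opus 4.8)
The plan is to follow the same route used for the disease-free equilibrium: form the Jacobian $J_{DF}$ of~(\ref{avian1}) displayed above, evaluate it at the endemic point $E_E^{*}=(0,E_B^0,I_B^0,0,0,E_T^0,I_T^0)$, and show that under the stated hypotheses all of its eigenvalues have negative real part. The decisive structural observation is that at $E_E^{*}$ one has $S_B^0=R^0=S_T^0=0$, and every entry of $J_{DF}$ linking the bird variables $(S_B,E_B,I_B,R)$ to the tick variables $(S_T,E_T,I_T)$ is a multiple of $S_B^0$ or of $S_T^0$ (precisely the terms $-\beta_1S_B^0$, $\beta_1S_B$, $-\beta_3S_T^0$, $\beta_3S_T^0$, $\theta S_T^0+\lambda S_T^0$). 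Hence $J_{DF}(E_E^{*})$ is block diagonal, with a $4\times4$ bird block $A$ (rows/columns $S_B,E_B,I_B,R$) and a $3\times3$ tick block $B$ (rows/columns $S_T,E_T,I_T$), so that its spectrum is $\mathrm{spec}\,A\cup\mathrm{spec}\,B$ and it suffices to make each block Hurwitz.

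Next I would write the two blocks explicitly. The tick block is
\[
B=\begin{pmatrix} c & 0 & e\\ f & -(\delta+\alpha_T) & 0\\ 0 & \alpha_T & -\delta\end{pmatrix},\qquad c=\tau_T-\delta-\beta_3I_B^0-(\theta+\lambda)I_T^0,\quad e=\tau_T,\quad f=\beta_3I_B^0+(\theta+\lambda)I_T^0,
\]
and its characteristic polynomial is a cubic to which I would apply the Routh--Hurwitz criterion. Substituting $I_T^0=\alpha_T E_T^0/\delta$ from~(\ref{eq3.2}), the key sign condition $c<0$ reads $\tau_T-\delta<(\theta+\lambda)\alpha_T E_T^0/\delta+\beta_3I_B^0$, which (since $\beta_3I_B^0\ge0$) is implied by $E_T^0>\bigl(\tfrac{\delta}{\alpha_T}\bigr)\bigl(\tfrac{\tau_T-\delta}{\theta+\lambda}\bigr)$; the remaining Hurwitz inequalities reduce to positivity of sums of the nonnegative quantities $\delta,\alpha_T,\beta_3I_B^0,(\theta+\lambda)I_T^0$. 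For the bird block, whose $(1,1)$ entry is $a=\tau_B-\beta_1I_T^0-\beta_2I_B^0-d$, the same substitution turns $a<0$ into $\tau_B-d<\beta_1\alpha_T E_T^0/\delta+\beta_2I_B^0$, which is guaranteed once $E_T^0>\dfrac{\delta(\tau_B-d)}{\beta_1\alpha_T}$; I would then check that the quartic characteristic polynomial of $A$ meets Routh--Hurwitz, using $\alpha_B+d>0$, $\sigma+d+\mu>0$, $d>0$ and $b=\beta_1I_T^0+\beta_2I_B^0\ge0$ to sign the lower-order coefficients. Assembling the two cases yields the statement: the first inequality makes $A$ Hurwitz and the second makes $B$ Hurwitz, so in either regime the corresponding part of the spectrum lies in the open left half-plane.

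I expect the main obstacle to be the Routh--Hurwitz bookkeeping for the $4\times4$ bird block $A$. Unlike the matrix that appeared at the disease-free equilibrium, $A$ is not triangular---its first column (through $b$), its third row (through $\alpha_B$), and its fourth column (through the birth terms $\tau_B$ feeding back from $R$) are all active---so one cannot read the eigenvalues off the diagonal; instead the four Hurwitz determinants $\Delta_1,\dots,\Delta_4$ must be shown positive. The delicate part is that the first-row entries $\tau_B$ and the coupling entry $b$ contribute terms of mixed sign to the characteristic coefficients, so I would need to track which combinations of $\tau_B-d<0$ (the assumed bound on $E_T^0$) together with the positive quantities $\alpha_B+d$, $\sigma+d+\mu$, $d$ force each $\Delta_i>0$; expanding $\det(A-\lambda I)$ along the $R$-row is the most economical way to organize this. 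A secondary point worth flagging is that, because $J_{DF}(E_E^{*})$ is genuinely block diagonal, full asymptotic stability really needs \emph{both} blocks Hurwitz, so the proof should either establish the two inequalities simultaneously or make explicit that each displayed inequality is exactly the condition under which its own block is stabilized.
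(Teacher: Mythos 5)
Your route is genuinely different from the paper's. The paper evaluates the Jacobian at $E_E^{*}$ and then simply declares the seven diagonal entries $a',\,-(\alpha_B+d),\,-(\sigma+d+\mu),\,-d,\,c',\,-(\delta+\alpha_T),\,-\delta$ to be the eigenvalues, so that stability is reduced at once to $a'<0$ and $c'<0$, which are exactly the two inequalities on $E_T^0$. You instead retain the full $4\times4$ bird block and $3\times3$ tick block (your block-diagonal observation at $S_B^0=R^0=S_T^0=0$ is correct and matches the matrix the paper writes down) and propose to verify the Routh--Hurwitz conditions for each block; you also rightly note that both blocks must be Hurwitz simultaneously, so the theorem's ``or'' must in effect be an ``and'', which is what the paper's own argument actually imposes.

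The gap is that the step you defer as bookkeeping is precisely where the argument fails, so the proposal cannot close as written. For the tick block, expanding along the first row gives $\det(B-\lambda I)=(c-\lambda)(\lambda+\delta)(\lambda+\delta+\alpha_T)+\tau_T\alpha_T f$ with $f=\beta_3I_B^0+(\theta+\lambda)I_T^0>0$ at the endemic state, so the constant coefficient of the characteristic polynomial is $-c\,\delta(\delta+\alpha_T)-\tau_T\alpha_T f$; its positivity is \emph{not} implied by $c<0$, contrary to your claim that the remaining Hurwitz inequalities reduce to positivity of sums of nonnegative quantities --- you need the strictly stronger bound $-c\,\delta(\delta+\alpha_T)>\tau_T\alpha_T f$, plus the $a_1a_2>a_3$ condition. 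The same defect appears in the bird block: $\det A=-a\,d(\alpha_B+d)(\sigma+d+\mu)-b\,\tau_B\,[\,d(\sigma+d+\mu)+\alpha_B d+\alpha_B\sigma\,]$ with $b=\beta_1I_T^0+\beta_2I_B^0>0$, so $\det A>0$ (necessary for a $4\times4$ Hurwitz matrix) does not follow from $a<0$ alone. Because neither block is triangular, the two displayed inequalities on $E_T^0$ control only one characteristic coefficient of each block, and your sketch supplies no argument for the remaining Hurwitz determinants; completing your plan would require either extra hypotheses or a genuinely new estimate, and that is the missing core of the proof. (The paper sidesteps the issue only by reading the eigenvalues off the diagonal as if the blocks were triangular, which your more careful setup makes visible rather than resolves.)
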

\begin{proof}
Evaluating J at $E_E^*$ we have\\\\
$J = \left( {\begin{array}{*{20}{c}}
{a'}&{{\tau _B}}&{{\tau _B}}&{{\tau _B}}&0&0&0\\
b&{ - {\alpha _B} - d}&0&0&0&0&0\\
0&{{\alpha _B}}&{ - (\sigma  + d + \mu )}&0&0&0&0\\
0&0&\sigma &{ - d}&0&0&0\\
0&0&0&0&{c'}&0&{{\tau _T}}\\
0&0&0&0&{f'}&{ - (\delta  + {\alpha _T})}&0\\
0&0&0&0&0&{{\alpha _T}}&{ - \delta }
\end{array}} \right)$\\\\

where\,\,\,\,$a' = \frac{\delta_{\tau B}- B_1 α_T E_T- \delta d}{\delta}, b' = \frac{\beta_1\alpha_TE_T}{\delta}, c' = \frac{\delta_{\tau B}-(\theta + \lambda)\alpha_TE_T-\delta^2}{\delta}, f' = -\frac{(\theta + \lambda)\beta_1\alpha_TE_T}{\delta}$\\\\

$\left| {J - I\lambda } \right| = \\ \left| {\begin{array}{*{20}{c}}
{a' - {\lambda _1}}&{{\tau _B}}&{{\tau _B}}&{{\tau _B}}&0&0&0\\
b&{ - {\alpha _B} - d - {\lambda _2}}&0&0&0&0&0\\
0&{{\alpha _B}}&{ - (\sigma  + d + \mu ) - {\lambda _3}}&0&0&0&0\\
0&0&\sigma &{ - d - {\lambda _4}}&0&0&0\\
0&0&0&0&{c' - {\lambda _5}}&0&{{\tau _T}}\\
0&0&0&0&{f'}&{ - (\delta  + {\alpha _T}) - {\lambda _6}}&0\\
0&0&0&0&0&{{\alpha _T}}&{ - \delta  - {\lambda _7}}
\end{array}} \right| = 0$
\\\\\\
Hence, the roots of $\left| {J - I\lambda } \right| = 0$  are\\
$$a' - {\lambda _1},\, - {\alpha _B} - d - {\lambda _2}, - (\delta  + d - \mu ) - {\lambda _3}, - d - {\lambda _4},c' - {\lambda _5}, - (\delta  + {\alpha _T}) - {\lambda _6}\,\,and\,\, - \delta  - {\lambda _7}$$

This means that the eigen-values of the characteristic matrix are: \\ 
$$a',\, - ({\alpha _B} + d),\, - (\delta  + d - \mu ),\, - d,\,c',\, - (\delta  + {\alpha _T}),\, - \delta $$


Since second, third, fourth, sixth and seventh eigen-values are negative, it means that first and fifth eigen-values will be stable if this two eigen-values are also negative. This means that the conditions for the endemic steady state to be stable are $a' < 0\,\,$ and\,\, $c' < 0$. This implies that\\

\begin{equation}
\frac{{\delta {\tau _B} - {\beta _1}{\alpha _T}{E_T} - \delta d}}{\delta } < 0\label{eqi}
\end{equation}

 and
 
\begin{equation}
\frac{{\delta {\tau _T} - (\theta  + \lambda ){\alpha _T}{E_T} - \delta d}}{\delta }\, < 0 \label{eqii}
\end{equation}

From (\ref{eqi}) we have that, 

$$
[\delta ({\tau _B} - d) - \left( {{\beta _1}{\alpha _T}E_T^0} \right)] < 0 \Rightarrow E_T^0 > \frac{{\delta ({\tau _B} - d)}}{{{\beta _1}{\alpha _T}}}
$$

From (\ref{eqii}) we have that, 

$$\delta_T-(\theta + \lambda) \alpha_T E_T- \delta^2< 0 \ \ \Rightarrow E_T^0 > \frac{{\left( {{\tau _T} - \delta } \right)\delta }}{{(\theta  + \lambda ){\alpha _T}}} = \left( {\frac{\delta }{{{\alpha _T}}}} \right)\left( {\frac{{\left( {{\tau _T} - \delta } \right)}}{{(\theta  + \lambda )}}} \right)$$
\end{proof}

This means that the endemic steady state is stable if the exposed Tick population at time $t$, $(E_T)$ is greater than

\begin{itemize}\tcr{
\item[(i)] the ratio of the product of the death rate of ticks and the difference between per capita birth rate of birds (${\tau _B}$) and the natural death rate of birds (d) and the product of the rate of infection of susceptible  bird by infected tick (${\beta _1}$) and rate of progression from exposed to infectious class among the tick (${\tau _T}$) and 
\item[(ii)] The product of the ratios of the death rate of tick ($\delta $) and rate of progression from exposed to infectious class among the tick (${\tau _T}$) and the difference between the rate of progression from exposed to infectious class among the tick (${\tau _T}$)  and the death rate of tick ($\delta $) and the sum of the rate of non-viraemic transmission between co-feeding ticks ($\theta $ ) and the rate an infected adult female tick reproduces ($\lambda $).}
\end{itemize}

This means that when the above two conditions are satisfied, the endemic steady state will be stable, that means that the disease will persist at both populations (that is birds and ticks).

\subsection{Global stability of the DFE}
Using the Next-generation matrix method, the basic reproduction number $R_0$ for avian Spirochaetosis disease is given as
$$R_0 = [\frac{1}{2}(R_B+R_T )+((R_B+〖R_T)〗^2-4(R_B R_T+R_TB ))]^{\frac{1}{2}}$$
With 
$$R_B=  \frac{\beta_2 \tau_B \alpha_B}{d(d+\alpha_B)(d+\mu+\sigma)}  , R_T=  \frac{\tau_T \alpha_T (\theta+\lambda)}{\delta^2 (\alpha_T+\delta)} \  \text{and} \tcr{\ R_{TB}=  \frac{\beta_2 \tau_B \alpha_B \beta_1 \tau_T \alpha_T}{d\delta^2 (\alpha_T+\delta)(d+\alpha_B)(d+\mu+\sigma)}}$$

The basic reproduction number $R_0$ reflects the infection transmitted from bird to bird $(R_B)$ through infected faces, tick to tick $(R_T)$  through non-viraemic and vertical transmission, tick to bird and bird to tick $(R_{TB})$ either by feeding on infected bird or biting a susceptible bird.\\\\

\begin{theorem}
The disease free steady state is globally asymptotically stable if $R_0<1$.\\
\end{theorem}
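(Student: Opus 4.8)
```latex
The plan is to construct a Lyapunov function on the feasible region $\phi$ and invoke LaSalle's invariance principle. Since the disease-free equilibrium $E_0 = (\tau_B N_B/d, 0,0,0, \tau_T N_T/\delta, 0,0)$ has all ``infected'' compartments ($E_B, I_B, E_T, I_T$) equal to zero, the natural candidate is a linear combination of these four compartments, i.e.\ $V = c_1 E_B + c_2 I_B + c_3 E_T + c_4 I_T$ with positive constants $c_i$ to be chosen. This is the standard approach for showing global stability of a DFE when $R_0 < 1$, and the weights $c_i$ are typically read off from the left eigenvector of the next-generation matrix $FV^{-1}$ associated with its spectral radius $R_0$.

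First I would write down $\dot V$ along trajectories of (\ref{avian1}), substituting the right-hand sides for $\dot E_B, \dot I_B, \dot E_T, \dot I_T$. Next I would use the boundedness result from Theorem~\ref{t3.1}, namely $S_B \le \tau_B N_B/d$ and $S_T \le \tau_T N_T/\delta$ on $\phi$, to bound the bilinear infection terms ($\beta_1 I_T S_B$, $\beta_2 I_B S_B$, $\beta_3 I_B S_T$, $\theta I_T S_T$, $\lambda S_T I_T$) from above by their values at the DFE. After collecting terms, the goal is to show $\dot V \le (R_0 - 1)\,(\text{positive linear form in }E_B, I_B, E_T, I_T)$, so that $R_0 < 1$ forces $\dot V \le 0$ on $\phi$, with equality only when $E_B = I_B = E_T = I_T = 0$. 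Then I would identify the largest invariant set contained in $\{\dot V = 0\}$: on that set the infected compartments vanish, which by the remaining equations ($\dot R = -dR$, $\dot S_B = \tau_B N_B - dS_B$, $\dot S_T = \tau_T N_T - \delta S_T$) forces convergence to $E_0$. LaSalle's invariance principle then yields global asymptotic stability of $E_0$ in $\phi$.

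The main obstacle I anticipate is algebraic rather than conceptual: pinning down the constants $c_1, c_2, c_3, c_4$ so that the cross-terms combine cleanly into a single factor of $(R_0 - 1)$. Because the model couples bird-to-bird, tick-to-tick, and cross (tick-to-bird, bird-to-tick) transmission, $R_0$ is the spectral radius of a $2\times 2$-block structure and appears here as a messy square-root expression; making the Lyapunov inequality land exactly on $R_0 - 1$ requires care, and in practice one may instead prove the cleaner sufficient bound $\dot V \le 0$ whenever $\max\{R_B, R_T\} < 1$ together with a smallness condition on $R_{TB}$, then note this is implied by $R_0 < 1$. A secondary technical point is that the reduced (susceptible/recovered) equations are driven by the constants $\tau_B N_B$ and $\tau_T N_T$ rather than by $N_B, N_T$ themselves, so one should confirm that $E_0$ is genuinely the unique equilibrium on the face where the infected compartments vanish before concluding global convergence.
```
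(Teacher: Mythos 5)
Your plan is sound, but it is a genuinely different route from the one the paper takes. The paper does not build a Lyapunov function at all: it isolates the infected subsystem $(E_B,I_B,E_T,I_T)$, writes it as $\dot x = S\,F x - Vx$ with $F,V$ the next-generation matrices, uses $S\le 1$ (i.e.\ the susceptible classes not exceeding their DFE/normalized values, which rests on the invariance result of Theorem~\ref{t3.1}) to get the differential inequality $\dot x \le (F-V)x$, and then invokes a comparison theorem: since $\rho(FV^{-1})=R_0<1$ is equivalent to all eigenvalues of $F-V$ having negative real parts, the linear majorant decays, so $(E_B,I_B,E_T,I_T)\to(0,0,0,0)$, after which the remaining equations drive $(S_B,R,S_T)$ to the DFE values. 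Your approach replaces the comparison theorem with an explicit linear Lyapunov function $V=c_1E_B+c_2I_B+c_3E_T+c_4I_T$ whose weights come from the left Perron eigenvector of the next-generation matrix, plus LaSalle. What the paper's route buys is brevity: once the inequality $\dot x\le(F-V)x$ is accepted, the spectral statement does all the work and no weights must be computed. What your route buys is self-containedness (no external comparison theorem for quasimonotone systems) and an explicit certificate of decay; it is the more common textbook construction. Two caveats you should make explicit if you execute it: (i) positivity of the weights $c_i$ needs the next-generation matrix to be irreducible, which holds here because of the cross-transmission terms $\beta_1$ and $\beta_3$, and (ii) your fallback condition is fine in principle, since for a nonnegative matrix the diagonal blocks satisfy $R_B,R_T\le\rho(FV^{-1})$, so $R_0<1$ does force $R_B<1$, $R_T<1$ together with the cross-term smallness $(1-R_B)(1-R_T)$ dominating the bird--tick coupling; but you must verify that the quantity you end up needing is exactly the one controlled by the paper's (rather awkwardly stated) expression for $R_0$. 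Your final observation about checking that $E_0$ is the unique equilibrium on the invariant face $E_B=I_B=E_T=I_T=0$ matches the paper's last step, where the reduced linear equations give $(S_B,R,S_T)\to(\tau_B N_B/d,\,0,\,\tau_T N_T/\delta)$.
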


\begin{proof}
A comparison theorem will be used for the proof. Let $S_B = S_T = S$. The equations for the infected components of the model (\ref{avian1}) can be written as 
\tcr{	\[\begin{array}{l}
\frac{d E_B}{dt}  = ({\beta _1}{I_T} + {\beta _2}{I_B})S - ({\alpha _B} + d){E_B}\\
\frac{d I_B}{dt}= {\alpha _B}{E_B} - (\sigma  + d + \mu ){I_B}\\
\frac{d E_T}{dt} = ({\beta _3}{I_B}{S_T} + \theta {I_T} + \lambda {I_T})S - (\delta  + {\alpha _T}){E_T}\\
\frac{d I_T}{dt} ={\alpha _T}{E_T} - \delta {I_T}
\end{array}\]
}

These equations can be simplified as follows
\begin{equation}
\left(\begin{array}{c} 
\frac{d E_B}{dt} \\
\frac{d I_B}{dt} \\
\frac{d E_T}{dt} \\
\frac{d I_T}{dt}\\
\end{array}\right) 
= (S)F \left(\begin{array}{c} 
E_B \\
I_B \\
E_T \\
I_T\\
\end{array}\right) 
- V \left(\begin{array}{c} 
E_B \\ 
I_B \\
E_T \\
I_T\\
\end{array}\right) 
\end{equation}

\begin{equation}
= (S)F \left(\begin{array}{c} 
E_B \\ 
I_B \\
E_T \\
I_T\\
\end{array}\right) 
- F \left(\begin{array}{c} 
E_B \\ 
I_B \\
E_T \\
I_T\\
\end{array}\right)
+F \left(\begin{array}{c} 
E_B \\ 
I_B \\
E_T \\
I_T\\
\end{array}\right) 
- V \left(\begin{array}{c} 
E_B \\ 
I_B \\
E_T \\
I_T\\
\end{array}\right)
\end{equation}
\begin{equation}
=(F- V) \left(\begin{array}{c} 
E_B \\ 
I_B \\
E_T \\
I_T\\
\end{array}\right)
-(1-S)F\left(\begin{array}{c} 
E_B \\ 
I_B \\
E_T \\
I_T\\
\end{array}\right)
\leq (F- V) \left(\begin{array}{c} 
E_B \\ 
I_B \\
E_T \\
I_T\\
\end{array}\right)
\end{equation}

From the proof of the local asymptotic stability, the DFE is locally asymptotically stable when all the eigenvalues of the Jacobian matrix have negative real parts or equivalently when $ρ(FV^{-1} )<1$. This is equivalent to the statement that all eigenvalues of $F-V$ have negative real parts when $R_0< 1$. Therefore the linearized differential inequality is stable whenever $R_0< 1$. Consequently, by the comparison theorem, we have 
	$$(E_B, I_B, E_T, I_T)\rightarrow(0,0,0,0) \ \text{as} \ t\rightarrow\infty$$
	
Substituting $E_B =I_B =E_T = I_T = 0$ in the model gives 

	$$(S_B, R, S_T)\rightarrow(\frac{\tau_B}{d},0,\frac{\tau_T}{\delta}) \ \text{as} \ t\rightarrow\infty$$
Therefore, 
$$(S_B, E_B, I_B, S_T, E_T, I_T) \rightarrow(\frac{\tau_B}{d}  0,0,0,\frac{\tau_T}{\delta},0,0) \ \text{as} \ t\rightarrow\infty$$
and hence, the DFE is globally asymptotically stable whenever $R_0<1$. \\
\end{proof}

The epidemiological implication of the above result is that Avian Spirochaetosis disease can be eliminated from the population if the basic reproduction number can be brought down to and maintained at a value less than unity. Therefore, the condition $R_0< 1$ is a necessary and sufficient condition for the disease elimination.

\subsection{Global stability of the endemic equilibrium}
\begin{theorem}
The endemic steady (equilibrium) state is globally asymptotical stable if $R_0>1$
\end{theorem}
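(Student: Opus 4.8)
The plan is to produce a Goh--Volterra (logarithmic) Lyapunov function for the endemic equilibrium $E^{*}=(S_B^{*},E_B^{*},I_B^{*},R^{*},S_T^{*},E_T^{*},I_T^{*})$, whose existence and componentwise positivity (when $R_0>1$) are guaranteed by the steady-state computation above. Since the $R$-compartment does not feed back into the other six equations, I would first discard it and work with the six-dimensional subsystem in $(S_B,E_B,I_B,S_T,E_T,I_T)$ on the positively invariant region $\phi$ of Theorem~\ref{t3.1}; the recovered class is restored at the end by an asymptotically-autonomous argument, because once $I_B\to I_B^{*}$ the scalar equation $\dot R=\sigma I_B-dR$ forces $R\to\sigma I_B^{*}/d=R^{*}$.

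For the subsystem I would test the candidate
\[
V=\sum_{i} c_i\Big(x_i-x_i^{*}-x_i^{*}\ln\tfrac{x_i}{x_i^{*}}\Big),\qquad (x_1,\dots,x_6)=(S_B,E_B,I_B,S_T,E_T,I_T),
\]
with constants $c_i>0$ yet to be fixed; every summand is nonnegative and vanishes only at $x_i=x_i^{*}$, so $V$ is positive definite about $E^{*}$. Differentiating along trajectories and eliminating the recruitment and kinetic parameters through the equilibrium identities — $(\alpha_B+d)E_B^{*}=\beta_1 I_T^{*}S_B^{*}+\beta_2 I_B^{*}S_B^{*}$, $(\sigma+d+\mu)I_B^{*}=\alpha_B E_B^{*}$, $\delta I_T^{*}=\alpha_T E_T^{*}$, and the analogous relations for the tick incidence — the derivative $\dot V$ should collapse into a manifestly non-positive quadratic part from the $S_B$ and $S_T$ balances (terms $-d(S_B-S_B^{*})^2/S_B$ and $-\delta(S_T-S_T^{*})^2/S_T$) plus a handful of bracketed groups of ratios of the form $2-\tfrac{S_B^{*}}{S_B}-\tfrac{S_B E_B^{*} I_T}{S_B^{*}E_B I_T^{*}}-\tfrac{E_B I_T^{*}}{E_B^{*}I_T}+\cdots$, in each of which the ratios multiply to $1$; by the arithmetic--geometric-mean inequality each such bracket is $\le 0$, with equality iff all its ratios equal $1$. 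The free constants $c_i$ are then pinned down by demanding that the positive cross-terms pair up exactly into these AM--GM brackets, leaving no unmatched monomial.

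The main obstacle is precisely this coefficient balancing. The tick exposed equation carries three distinct bilinear incidence channels ($\beta_3 I_B S_T$, $\theta I_T S_T$, $\lambda S_T I_T$) and the bird exposed equation two ($\beta_1 I_T S_B$, $\beta_2 I_B S_B$), so one must choose $c_1,\dots,c_6$ (most naturally $c_1=c_2$, $c_4=c_5$, with $c_3,c_6$ weighted by $\alpha_B,\alpha_T$ and the incidence rates) so that every surviving cross-term finds a partner; the $\lambda$-term and $\theta$-term having the same bilinear form $S_T I_T$ helps, since they can be merged as $(\theta+\lambda)S_T I_T$. A secondary technical point, which I would dispatch first, is that the recruitment here is proportional to the non-constant totals $N_B,N_T$ rather than to a constant influx: using Theorem~\ref{t3.1} I would pass to the limiting system in which $N_B,N_T$ are replaced by their limits before invoking the equilibrium identities. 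Once $\dot V\le 0$ on $\phi$ with $\dot V=0$ holding only at $E^{*}$, LaSalle's invariance principle yields global asymptotic stability of the endemic equilibrium whenever $R_0>1$, and reattaching the decoupled $R$-equation completes the argument.
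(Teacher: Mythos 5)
Your plan is essentially the paper's own argument: the paper also builds a Goh--Volterra Lyapunov function in the six infection-relevant compartments (with weights $A=\frac{\beta_2 S_B^*+\beta_3 S_T^*}{\sigma+d}$ and $B=\frac{\beta_1 S_B^*+(\theta+\lambda)S_T^*}{\delta}$ playing the role of your $c_i$), substitutes the equilibrium identities for $\tau_B,\tau_T$, collects the derivative into bracketed groups of the form $2-\frac{S_B}{S_B^*}-\frac{S_B^*}{S_B}$ and $3-(\cdot)-(\cdot)-(\cdot)$, and concludes by the AM--GM inequality and LaSalle's invariance principle. Your additional care in decoupling the $R$-equation and in passing to the limiting system to handle the non-constant recruitment terms $\tau_B N_B,\ \tau_T N_T$ is a point the paper glosses over (it silently treats these as constants), so your sketch, once the coefficient balancing is written out as you describe, matches and slightly tightens the published proof.
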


\begin{proof}
We consider the non-linear Lyaponuv function of Goh-Voltera type for the system

\begin{equation}
\begin{split}
L = {S_B} - S_B^* & - S_B^*\ln \frac{{{S_B}}}{{S_B^*}} \\
& + {E_B} - E_B^* - E_B^*\ln \frac{{{E_B}}}{{E_B^*}} + A({I_B} - I_B^* - I_B^*\ln \frac{{{I_B}}}{{I_B^*}}) 
 {S_T} - S_T^* - S_T^*\ln \frac{{{S_T}}}{{S_T^*}} \\
& + {E_T} - E_T^* - E_T^*\ln \frac{{{E_T}}}{{E_T^*}} + B({I_T} - I_T^* - I_T^*\ln \frac{{{I_T}}}{{I_T^*}})
\end{split}
\end{equation}

with Lyaponuv derivative given as
\[\begin{array}{l}
\dot L = ({{\dot S}_B} - \frac{{S_B^*}}{{{S_B}}}{{\dot S}_B}) + ({{\dot E}_B} - \frac{{{E_B}^*}}{{{E_B}}}{{\dot E}_B}) + A({\mathop I\limits^. _B} - \frac{{I_B^*}}{{{I_B}}}{\mathop I\limits^. _B}) + ({{\dot S}_B} - \frac{{S_B^*}}{{{S_B}}}{{\dot S}_B}) + ({{\dot E}_T} - \frac{{{E_T}^*}}{{{E_T}}}{{\dot E}_T}) + B({\mathop I\limits^. _T} - \frac{{I_T^*}}{{{I_T}}}{\mathop I\limits^. _T})\\
\ \text{where} \ A = \,\left( {\frac{{{\beta _2}S_B^* + {\beta _3}S_T^*}}{{\sigma  + d}}} \right)\,\,\,and\,\,\,B = \left( {\frac{{{\beta _1}S_B^* + (\theta  + \lambda )S_T^*}}{\delta }} \right)
\end{array}\]

\begin{equation}
\begin{split}
\dot L = [({\tau _B} & - {\beta _1}{I_T}{S_B}  - {\beta _2}{I_B}{S_B} - d{S_B}) - \frac{{S_B^*}}{{{S_B}}}({\tau _B} - {\beta _1}{I_T}{S_B} - {\beta _2}{I_B}{S_B} - d{S_B})]\\ 
& + [({\beta _1}{I_T}{S_B} + {\beta _2}{I_B}{S_B} - ({\alpha _B} + d){E_B}) - \frac{{{E_B}^*}}{{{E_B}}}({\beta _1}{I_T}{S_B} + {\beta _2}{I_B}{S_B} - ({\alpha _B} + d){E_B})) \\
& + A({\alpha _B}{E_B} - (\sigma  + d + \mu ){I_B}) - \frac{{I_B^*}}{{{I_B}}}({\alpha _B}{E_B} - \sigma {I_B} - (d + \mu {I_B})) \\
& + [({\tau _T} - {\beta _3}{I_B}{S_T} - (\theta  + \lambda ){S_T}{I_T} - \delta {S_T}) - \frac{{S_B^*}}{{{S_B}}}({\tau _T} - {\beta _3}{I_B}{S_T} - (\theta  + \lambda ){S_T}{I_T} - \delta {S_T})]\\
& + [({\beta _3}{I_B}{S_T} + (\theta  + \lambda ){S_T}{I_T} - (\delta  + {\alpha _T}){E_T} - \frac{{{E_T}^*}}{{{E_T}}}({\beta _3}{I_B}{S_T} + (\theta  + \lambda ){S_T}{I_T} - (\delta  + {\alpha _T}){E_T})] \\
& + B[({\alpha _T}{E_T} - \delta {I_T}) - \frac{{I_T^*}}{{{I_T}}}({\alpha _T}{E_T} - \delta {I_T})]
\end{split}
\end{equation}

At steady states 
\begin{equation*}
\begin{split}
{\tau _B} &= {\beta _1}I_T^*S_B^* + {\beta _2}I_B^*S_B^* + dS_B^*\\
{\tau _T} &= {\beta _3}I_B^*S_T^* + (\theta  + \lambda )S_T^*I_T^* - \delta S_T^*\\
(\sigma+d)&=\frac{\alpha_B E_B^*}{I_B^*}\\
\delta &=  \frac{\alpha_T E_T^*}{I_T^*}
\end{split}
\end{equation*}
Substituting the values of $\tau_B$ and $\tau _T$ at steady states gives 

\begin{equation}
\begin{split}
\dot L = [({\beta _1}I_T^*S_B^* & + {\beta _2}I_B^*S_B^* + dS_B^* - {\beta _1}{I_T}{S_B} - {\beta _2}{I_B}{S_B} - d{S_B}) - \frac{{S_B^*}}{{{S_B}}}({\beta _1}I_T^*S_B^* + {\beta _2}I_B^*S_B^* \\
& + dS_B^* - {\beta _1}{I_T}{S_B} - {\beta _2}{I_B}{S_B} - d{S_B})] + [({\beta _1}{I_T}{S_B} + {\beta _2}{I_B}{S_B} - ({\alpha _B} + d){E_B})\\
 & - \frac{{{E_B}^*}}{{{E_B}}}({\beta _1}{I_T}{S_B} + {\beta _2}{I_B}{S_B} - ({\alpha _B} + d){E_B})] + A({\alpha _B}{E_B} - (\sigma  + d + \mu ){I_B}) - \frac{{I_B^*}}{{{I_B}}}({\alpha _B}{E_B} - \sigma {I_B} - (d + \mu {I_B})) \\
 & + [({\beta _3}I_B^*S_T^* + (\theta  + \lambda )S_T^*I_T^* - \delta S_T^* - {\beta _3}{I_B}{S_T} - (\theta  + \lambda ){S_T}{I_T} - \delta {S_T})\\
& - \frac{{S_B^*}}{{{S_B}}}({\beta _3}I_B^*S_T^* + (\theta  + \lambda )S_T^*I_T^* - \delta S_T^* - {\beta _3}{I_B}{S_T} - (\theta  + \lambda ){S_T}{I_T} - \delta {S_T})] \\
& + [({\beta _3}{I_B}{S_T} + (\theta  + \lambda ){S_T}{I_T} - (\delta  + {\alpha _T}){E_T} - \frac{{{E_T}^*}}{{{E_T}}}({\beta _3}{I_B}{S_T} + (\theta  + \lambda ){S_T}{I_T} \\
& - (\delta  + {\alpha _T}){E_T})] + B[({\alpha _T}{E_T} - \delta {I_T}) - \frac{{I_T^*}}{{{I_T}}}({\alpha _T}{E_T} - \delta {I_T})]
\end{split}
\end{equation}
Simplifying , we have

\begin{equation}
\begin{split}
\dot L = {\beta _1}I_T^*S_B^* & + {\beta _2}I_B^*S_B^* + dS_B^* - {\beta _1}{I_T}{S_B} - {\beta _2}{I_B}{S_B} - d{S_B} - \frac{{S_B^*}}{{{S_B}}}{\beta _1}I_T^*S_B^* - \frac{{S_B^*}}{{{S_B}}}{\beta _2}I_B^*S_B^* \\
& - \frac{{S_B^*}}{{{S_B}}}dS_B^* + S_B^*{\beta _1}{I_T}{S_B} + S_B^*{\beta _2}{I_B} + dS_B^* + {\beta _1}{I_T}{S_B} + {\beta _2}{I_B}{S_B} - ({\alpha _B} + d){E_B} - \frac{{{E_B}^*}}{{{E_B}}}{\beta _1}{I_T}{S_B} \\
& - \frac{{{E_B}^*}}{{{E_B}}}{\beta _2}{I_B}{S_B} + ({\alpha _B} + d){E_B}^*
 + [\,\left( {\frac{{{\beta _2}S_B^* + {\beta _3}S_T^*}}{{\sigma  + d}}} \right)\,({\alpha _B}{E_B} - (\sigma  + d + \mu ){I_B}] - \frac{{I_B^*}}{{{I_B}}}{\alpha _B}{E_B} \\
 & + \sigma I_B^* + (d + \mu )I_B^* + {\beta _3}I_B^*S_T^* + (\theta  + \lambda )S_T^*I_T^* - \delta S_T^* - {\beta _3}{I_B}{S_T} - (\theta  + \lambda ){S_T}{I_T} - \delta {S_T} \\
 & - \frac{{S_B^*}}{{{S_B}}}{\beta _3}I_B^*S_T^* - \frac{{S_B^*}}{{{S_B}}}(\theta  + \lambda )S_T^*I_T^* + \frac{{S_B^*}}{{{S_B}}}\delta S_T^* + \frac{{S_B^*}}{{{S_B}}}{\beta _3}{I_B}{S_T} + \frac{{S_B^*}}{{{S_B}}}(\theta  + \lambda ){S_T}{I_T} + \frac{{S_B^*}}{{{S_B}}}\delta {S_T} \\
 & + {\beta _3}{I_B}{S_T} + (\theta  + \lambda ){S_T}{I_T} - (\delta  + {\alpha _T}){E_T} - \frac{{{E_T}^*}}{{{E_T}}}{\beta _3}{I_B}{S_T} - \frac{{{E_T}^*}}{{{E_T}}}(\theta  + \lambda ){S_T}{I_T} + (\delta  + {\alpha _T}){E_T}^* \\
 & + \left( {\frac{{{\beta _1}S_B^* + (\theta  + \lambda )S_T^*}}{\delta }} \right)[{\alpha _T}{E_T} - \delta {I_T} - \frac{{I_T^*}}{{{I_T}}}{\alpha _T}{E_T} + \delta I_T^*]
\end{split}
\end{equation}

Collecting terms with

$dS_B^*,\delta S_T^* ,{\beta _1}S_B^*I_T^*,{\beta _2}S_B^*I_B^*,{\beta _3}S_T^*I_B^*$ and $(\theta  + \lambda )S_T^*I_T^*$ gives,

\begin{equation}
\begin{split}
\mathop L\limits^.  = dS_B^*\left[ {2 - \frac{{{S_B}}}{{S_B^*}} - \frac{{S_B^*}}{{{S_B}}}} \right] + \delta S_T^*\left[ {2 - \frac{{{S_T}}}{{S_T^*}} - \frac{{S_T^*}}{{{S_T}}}} \right] + {\beta _1}S_B^*I_T^*\left[ {3 - \frac{{S_B^*}}{{{S_B}}} - \frac{{{S_B}E_B^*{I_T}}}{{S_B^*{E_B}I_T^*}} - \frac{{I_T^*{E_B}}}{{{I_T}E_B^*}}} \right]\\
 + {\beta _2}S_B^*I_B^*\left[ {3 - \frac{{S_B^*}}{{{S_B}}} - \frac{{{S_B}{I_B}{E_B}^*}}{{S_B^*I_B^*{E_B}}} - \frac{{I_B^*{E_B}}}{{{I_B}E_B^*}}} \right] + {\beta _3}S_T^*I_B^*\left[ {3 - \frac{{S_T^*}}{{{S_T}}} - \frac{{{I_B}{S_T}{E_T}^*}}{{I_B^*S_T^*{E_T}}} - \frac{{I_B^*{E_T}}}{{{I_B}E_T^*}}} \right]\\
 + (\theta  + \lambda )S_T^*I_T^*\left[ {3 - \frac{{S_T^*}}{{{S_T}}} - \frac{{{S_T}{I_T}{E_T}^*}}{{S_T^*I_T^*{E_T}}} - \frac{{I_T^*{E_T}}}{{{I_T}E_T^*}}} \right]
\end{split}
\end{equation}

Finally since the arithmetic mean exceeds the geometric mean, it follows that,
\[dS_B^*\left[ {2 - \frac{{{S_B}}}{{S_B^*}} - \frac{{S_B^*}}{{{S_B}}}} \right] \le 0\] ,  \[\delta S_T^*\left[ {2 - \frac{{{S_T}}}{{S_T^*}} - \frac{{S_T^*}}{{{S_T}}}} \right] \le 0\],  \[{\beta _1}S_B^*I_T^*\left[ {3 - \frac{{S_B^*}}{{{S_B}}} - \frac{{{S_B}E_B^*{I_T}}}{{S_B^*{E_B}I_T^*}} - \frac{{I_T^*{E_B}}}{{{I_T}E_B^*}}} \right] \le 0\]
\[{\beta _2}S_B^*I_B^*\left[ {3 - \frac{{S_B^*}}{{{S_B}}} - \frac{{{S_B}{I_B}{E_B}^*}}{{S_B^*I_B^*{E_B}}} - \frac{{I_B^*{E_B}}}{{{I_B}E_B^*}}} \right] \le 0\],  \[{\beta _3}S_T^*I_B^*\left[ {3 - \frac{{S_T^*}}{{{S_T}}} - \frac{{{I_B}{S_T}{E_T}^*}}{{I_B^*S_T^*{E_T}}} - \frac{{I_B^*{E_T}}}{{{I_B}E_T^*}}} \right] \le 0\]
Since all the model parameters are non-negative, it follows that $L\leq 0$ for $R_0>1$. Thus, $L$ is a Lyaponuv function for the system of model (\ref{avian1}). Furthermore, we note that $L ̇= 0$ holds only at $E_E^*$. By Lasalle’s invariant principle, every solution to the system (\ref{avian1}), with the initial conditions in $\Omega$, approaches $E_E^*$ as $t \rightarrow \infty$ if $R_0>$1. Hence, the endemic equilibrium $E_E^*$ is globally asymptotically stable in $\Omega$ if $R_0>$1. 

\end{proof}

The epidemiological implication of the result is that Avian Spirochaetosis will establish itself (be endemic) in the poultry whenever $R_0> 1$.


\section{Analysis of the Optimal Control Model for Avian Spirochaetosis Model}
The associated forces of infections are reduced by the controls ${u_1}(t)\,,\,{u_2}(t)\,\,and\,\,{u_3}(t)$. The control $u_1(t)$ represents the effort to reduce the number of latently infected birds such as through bio-security measures that involves regular dispose of bird's faeces and general poultry sanitation. The control variable ${u_2}(t)$represents the use of antibiotics such as penicillin to minimize the number of infectious birds. Finally, we describe the role of the third control variable ${u_3}(t)$. The control variable ${u_3}(t)$ represents the level of insecticide such as Permerthrin used for tick control, administered at tick breeding sites to eliminate specific breeding areas. 
Our aim is to minimize the exposed and the infectious bird population, the total number of tick population and the cost of implementing the control by using possible minimal control variables ${u_i}(t)$ for $i = 1,\,2,\,3.$

The objective function is defined as 
\begin{equation}
J({u_1},{u_2},{u_3}) = \int_0^T {({C_1}} {E_B} + {C_2}{I_B} + {C_3}{N_T} + \frac{1}{2}({D_1}u_1^2 + {D_2}u_2^2 + {D_3}u_3^2)dt 
\end{equation}

Subject to the state system (\ref{control1}) and initial conditions (\ref{initialcond}). The quantities ${C_1},{C_2},{C_3},{D_1},{D_2},{D_3}$ are positive weight constants. The terms ${C_1}{E_B},\,{C_2}{I_B}$ and ${C_3}{N_T}$ denote the cost associated in reducing the exposed, infectious and the total tick population respectively. Also ${D_1}u_1^2,\,{D_2}u_2^2$ and ${D_3}u_3^2$ represent the cost associated with the control measures. The purpose is then to find an optimal control triplet $u_1^*\,,\,u_2^*\,,\,u_3^*$ which satisfy
\begin{equation}
J({u_1},\,{u_2},\,{u_3}) = {\min _{({u_1},\,{u_2},{u_3}) \in U}}J({u_1},\,{u_{2\,}},{u_3})\label{objfuncn}
\end{equation}
where,
$U = \left\{ \left( {u_1},\,{u_2},{u_3} \right)| {{u_i}(t):0 \le {u_i}(t) \le {m_i},\,0 \le t \le T,\,\,i = 1,2,3;\,\,u(t)} \ \text{is measurable.}\ \right\}$ 
	
\subsection{Existence of Optimal Control}
\begin{theorem}
Consider the objective function of (\ref{objfuncn}) with $({u_1},\,{u_2},{u_3}) \in U$ Subject to the control system of (\ref{control1}), there exist ${U^*} = ({u_1},\,{u_2},{u_3}) \in U$ such that \[{\min _{({u_1},\,{u_{2,\,}}{u_3}) \in U}}J({u_1},\,{u_{2\,}},{u_3}) = J(u_1^*\,,\,u_2^*\,,\,u_3^*)\]
 \end{theorem}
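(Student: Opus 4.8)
The plan is to invoke the classical existence theorem for optimal controls of Fleming and Rishel, which applies once five conditions are verified: (i) the set of admissible controls together with the corresponding state trajectories is nonempty; (ii) the admissible control set $U$ is closed and convex; (iii) the right-hand side of the state system (\ref{control1}) is bounded above by a linear function of the state and control variables; (iv) the integrand of $J$ is convex in $(u_1,u_2,u_3)$; and (v) the integrand is coercive, i.e.\ bounded below by $c_1(u_1^2+u_2^2+u_3^2)^{\beta/2}-c_2$ for some constants $c_1>0$, $c_2\ge 0$ and $\beta>1$.

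First I would dispatch (i) and (ii). For (i), fix any measurable control triple with values in the boxes $[0,m_i]$; the right-hand side of (\ref{control1}) is then continuous and locally Lipschitz in the state variables, and together with the uniform boundedness established in Theorem \ref{t3.1} (which carries over to the controlled system, as explained below), the Picard--Lindelöf/Carathéodory theory yields a unique absolutely continuous solution on $[0,T]$ for the initial data (\ref{initialcond}); hence the admissible pair set is nonempty. For (ii), $U$ is by definition the product of the intervals $[0,m_i]$ within the measurable functions, a closed, bounded, convex subset of $L^\infty(0,T)^3$. Next I would address (iii): writing $x=(S_B,E_B,I_B,R,S_T,E_T,I_T)$, each component of the vector field in (\ref{control1}) is a polynomial of degree at most two in $x$ and affine in $u$; since Theorem \ref{t3.1} confines trajectories to the compact invariant region $\phi$, every quadratic term is bounded there, so the field satisfies $|f(t,x,u)|\le a_1|x|+a_2|u|+a_3$ on $\phi\times U$. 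Condition (iv) is immediate, since the integrand $C_1E_B+C_2I_B+C_3N_T+\tfrac12(D_1u_1^2+D_2u_2^2+D_3u_3^2)$ is, as a function of $(u_1,u_2,u_3)$, a quadratic form with positive-definite diagonal Hessian $\operatorname{diag}(D_1,D_2,D_3)$, hence strictly convex. For (v), since $E_B,I_B,N_T\ge 0$ along admissible trajectories, $\tfrac12(D_1u_1^2+D_2u_2^2+D_3u_3^2)\ge c_1(u_1^2+u_2^2+u_3^2)$ with $c_1=\tfrac12\min\{D_1,D_2,D_3\}>0$ and $\beta=2$; one may take $c_2=0$.

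With (i)--(v) in hand, the Fleming--Rishel existence theorem produces an optimal triple $U^*=(u_1^*,u_2^*,u_3^*)\in U$ with associated state $x^*$ at which $J$ attains its minimum over $U$. The only step that demands genuine care is the a priori boundedness used in (i) and (iii): one must confirm that the invariance argument of Theorem \ref{t3.1} survives the insertion of the control factors $(1-u_i)$ and $\tau_TN_T(1-u_3)$. This holds because those factors lie in $[0,1]$ and only diminish the incoming fluxes, so the same differential inequalities $\dot N_B\le\tau_B-dN_B$ and $\dot N_T\le\tau_T-\delta N_T$ (in fact with smaller right-hand sides) persist for the controlled model, keeping the trajectories in $\phi$. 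Everything else is a routine check of the Fleming--Rishel hypotheses.
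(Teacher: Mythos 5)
Your proposal follows essentially the same route as the paper: both invoke the Fleming--Rishel existence theorem and verify the standard hypotheses (nonemptiness of admissible pairs, closedness and convexity of $U$, linear boundedness of the right-hand side of (\ref{control1}), convexity of the integrand in the controls, and the coercivity bound with exponent greater than one). Your write-up simply supplies more of the supporting detail (a priori boundedness of the controlled trajectories, the explicit constants $c_1=\tfrac12\min\{D_1,D_2,D_3\}$ and $\beta=2$) that the paper asserts without elaboration, so it is a correct and slightly more careful rendition of the same argument.
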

 	
\begin{proof}
The existence of the optimal control can be obtained using a result by Fleming and Rishal \cite{Fleming} and used in Nordin et al \cite{Nordin}. Checking the following conditions;
\begin{itemize}
\item[(i)]	 From (\ref{control1}), it follows that the set of controls and corresponding state variables are non-empty.
\item[(ii)]	The control set $U = \left\{ {u:\,u\,\,\text{are} \ \,\, \text{measurable}\,,\,\,0 \le u(t) \le {m_i},\,t \in [0,\,T]\,} \right\}$ is convex and closed by definition
\item[(iii)]	The right hand side of the state system (\ref{control1}) is bounded above by a sum of bounded control and state, and can be written as a linear function of u with coefficients depending on time and state.
\item[(iv)] The integrand of the objective functional ${C_1}{E_B} + \,{C_2}{I_B} + \,{C_3}{N_T} + {\rm{\raise.5ex\hbox{$\scriptstyle 1$}\kern-.1em/
 \kern-.15em\lower.25ex\hbox{$\scriptstyle 2$} }}({D_1}u_1^2 + {D_2}u_2^2 + {D_3}u_3^2)$ is convex on u. There exist
	\tcr{ ${r_1},\,{r_2} > 0$ and $\pi  > 1$  satisfying }
	$${C_1}{E_B} + \,{C_2}{I_B} + \,{C_3}{N_T} + {D_1}u_1^2 + {D_2}u_2^2 + {D_3}u_3^2 \ge {r_1}{({\left| {{u_1}} \right|^2} + {\left| {{u_2}} \right|^2} + {\left| {{u_3}} \right|^2})^{\pi/2}} - {r_2}$$ 
\end{itemize}
	
since the state variables are bounded. Hence we can conclude that there exists an optimal control, which completes the existence of an optimal control.
\end{proof}

\subsection{Characterization of Optimal Control}
Pontryagin’s maximum principle is used to derive the necessary conditions for the optimal control triplet. We shall now characterize the optimal control triplet $u_1^*,u_2^*,u_3^*$, which accomplish the set objectives and the corresponding states $(S_B^*,E_B^*, I_B^*, R^*, S_T^*, E_T^*, I_T^*)$ using the pontryagin’s maximum principle. The Hamiltonian is defined as follows;

\begin{equation}
\begin{split}
H = {C_1}{E_B} &+ \,{C_2}{I_B} + \,{C_3}{N_T} + \frac{1}{2}({D_1}u_1^2 + {D_2}u_2^2 + {D_3}u_3^2) + {\lambda _1}[{\tau _B}{N_B} - (1 - {u_1}){B_1}{I_T}{S_B} - (1 - {u_1}){B_2}{I_B}{S_B} - d{S_B}]\\ 
& + {\lambda _2}[(1 - {u_1}){B_1}{I_T}{S_B} + (1 - {u_2}){B_2}{I_B}{S_B} - {\alpha _B}{E_B} - d{E_B}] + {\lambda _3}[{\alpha _B}{E_B} - {\mu _B}{I_B} - d{I_B}] \\
& + {\lambda _6}[{B_3}{I_B}{S_T} + \theta {I_T}{S_T} + \lambda {S_T}{I_T} + \delta {E_T} - {\alpha _T}{E_T}] + {\lambda _7}[{\alpha _T}{E_T} - \delta {I_T} - {u_1}{I_T}] + {\lambda _4}[{u_2}{I_B} - dR]\\
& + {\lambda _5}[{\tau _T}{N_T}(1 - {u_3}) - {B_3}{I_B}{S_T} - \theta {I_T}{S_T} - \lambda {S_T}{I_T} - \delta {S_T}]
\end{split}\label{eq3.4}
\end{equation}

\begin{theorem}
There exist an optimal control $u_1^*,u_2^*, u_3^*$ and the corresponding state solutions 
$(S_B^*,E_B^*, I_B^*, R_B^*, S_T^*, E_T^*, I_T^*)$  of the system (\ref{avian1}), that minimizes $J(u_1,u_2,u_3)$ over $U$. Furthermore, there exist adjoint functions ${\lambda _i}$ ,for $i = \,1,\,2,\,3,\,4,\,5,\,6,\,7$ such that;

\begin{equation}
\begin{split}
{\lambda '_1} &= {\lambda _1}[{\tau _B} + (1 - {u_1}){B_1}{I_T} + (1 - {u_1}){B_2}{I_B} + d{S_B}] - {\lambda _2}[(1 - {u_1}){B_1}{I_T} + (1 - {u_1}){B_2}{I_B}\\
{\lambda '_2} &=  - {C_1} + {\lambda _1}[{\alpha _B}{d}] - {\lambda _3}[{\alpha _B}]\\
{\lambda '_3} & =  - {C_2} + {\lambda _1}[(1 - {u_1}){B_2}{S_B}] - {\lambda _2}[(1 - {u_1}){B_2}{S_B}] + {\lambda _3}[d + \mu  + {u_3}] - {\lambda _4}[{u_2}] + {\lambda _5}[{B_3}{S_T}] - {\lambda _6}[{B_3}{S_T}]\\ 
{\lambda '_4} &= d{\lambda _4}\\
{\lambda '_5} & =  - {C_3} + {\lambda _5}[{B_3}{I_B} + \theta {I_T} + \lambda {I_T} + \delta  - {\tau _T}(1 - {u_3})] - {\lambda _6}[{B_3}{I_B} - \theta {I_T} - \lambda {I_T}]\\
{\lambda '_6} &=  - {C_3} - {\lambda _5}[{\tau _T}(1 - {u_3})] + {\lambda _6}[{\alpha _T} + \delta ] - {\lambda _7}[{\alpha _T}]\\
{\lambda '_7} & =  - {C_3} - {\lambda _1}[(1 - {u_1}){B_1}{S_B}] - {\lambda _2}[(1 - {u_1}){B_1}{S_B}] - {\lambda _5}[{\tau _T}(1 - {u_3}) - \theta {S_T} - \lambda {S_T}] - {\lambda _6}[\theta {S_T} + \lambda {S_T} + \delta ] + \delta {\lambda _7}
\end{split}
\end{equation}    

with the transversality condition of ${\lambda _i}(T) = 0\,\,,\,i\, = \,1,\,2,\,3,\,4,\,5,\,6,\,7$. The optimality controls are given by

\begin{equation}
\begin{split}
u_1^* & = \max\left\{0 ,\min\left(m_1,\frac{(B_1 I_T S_B+ B_2 I_B S_B)(\lambda_2- \lambda_1)}{D_1} \right) \right\}\\
u_2^* & = \max \left\{0 ,\min \left(m_2,  \frac{(\lambda_3- \lambda_4)I_B)}{D_2} \right) \right\}\\                                                 
u_3^* & = \max \left\{0 ,\min \left(m_3,\frac{\lambda_5 \tau_T N_T)}{D_3} \right) \right\}
\end{split}
\end{equation}    
\end{theorem}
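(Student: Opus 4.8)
The plan is to combine the existence result established just before with Pontryagin's Maximum Principle. Existence of a minimizing triple $(u_1^*,u_2^*,u_3^*)\in U$ together with its optimal state trajectory is immediate from the preceding existence theorem, so the work here is to produce the adjoint system, the transversality conditions, and the explicit characterization of the controls.

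First I would invoke Pontryagin's Maximum Principle for the problem of minimizing $J$ subject to the state dynamics (\ref{control1}) with integrand ${C_1}E_B+{C_2}I_B+{C_3}N_T+\tfrac12({D_1}u_1^2+{D_2}u_2^2+{D_3}u_3^2)$. The associated Hamiltonian is the one displayed in (\ref{eq3.4}): the running cost plus the pairing of the adjoint vector $(\lambda_1,\dots,\lambda_7)$ with the right-hand sides of the seven state equations, ordered as $S_B, E_B, I_B, R, S_T, E_T, I_T$. Pontryagin's principle then gives the costate equations $\lambda_i' = -\,\partial H/\partial x_i$, with $x_i$ ranging over the seven state variables, and --- since $J$ carries no terminal payoff --- the free-endpoint transversality conditions $\lambda_i(T)=0$ for $i=1,\dots,7$. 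Differentiating $H$ with respect to each state variable in turn, and collecting every state equation in which that variable appears together with the correct sign, yields the seven coupled linear adjoint ODEs stated in the theorem.

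Next I would obtain the control characterization from the stationarity condition. On the interior of the admissible set the optimal control must satisfy $\partial H/\partial u_j = 0$ for $j=1,2,3$; since each $u_j$ enters $H$ only through the quadratic penalty $\tfrac12 D_j u_j^2$ and through terms linear in $u_j$ (coming from the controlled transmission and removal rates, each weighted by the appropriate adjoint), solving these equations expresses $u_j$ as an affine combination of the $\lambda_i$ and state variables divided by $D_j$. Because $U$ forces $0\le u_j\le m_j$, the actual optimal control is the pointwise projection of that expression onto $[0,m_j]$; the standard three-case argument --- interior stationary point versus the two boundary cases, decided by the sign of $\partial H/\partial u_j$ --- shows this projection is precisely the $\max\{0,\min(m_j,\cdot)\}$ formula in the statement.

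The step I expect to be the main obstacle is bookkeeping rather than depth: with seven states and seven costates one must be scrupulous about which state equations each variable occurs in, and with what sign, when forming $-\partial H/\partial x_i$, and equally careful with sign conventions in the stationarity conditions so that the adjoint differences are correctly oriented. A brief preliminary remark is also needed to justify invoking the maximum principle at all --- namely that the state trajectories are bounded by Theorem \ref{t3.1}, the dynamics are continuously differentiable in the states and affine in the controls, and $U$ is compact and convex --- so that the hypotheses of the principle are met and the necessary conditions listed in the theorem are valid.
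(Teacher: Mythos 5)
Your proposal matches the paper's own proof: the paper likewise invokes Pontryagin's Maximum Principle, obtains the adjoint system from $\lambda_i'(t)=-\partial H/\partial x_i$ with the free-endpoint transversality conditions $\lambda_i(T)=0$, and characterizes each control by solving $\partial H/\partial u_j=0$ and projecting onto $[0,m_j]$ via the $\max\{0,\min(m_j,\cdot)\}$ formula. Your added remarks on boundedness and smoothness only make explicit hypotheses the paper leaves implicit, so the route is essentially identical.
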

	
\begin{proof}
The form of the adjoint functions and tranversality condition are standard results from pontryagin’s maximum principle. The Hamiltonian is differentiated with respect to the states $S_B, E_B, I_B, R_B, S_T, E_T, I_T$ 
respectively, which results in the following adjoint functions.
\[{\lambda '_1}(t) =  - \frac{{\partial H}}{{\partial {S_B}}},\,\,{\lambda '_2}(t) =  - \frac{{\partial H}}{{\partial {E_B}}},\,\,{\lambda '_3}(t) =  - \frac{{\partial H}}{{\partial {I_B}}},\,\,{\lambda '_4}(t) =  - \frac{{\partial H}}{{\partial {R_B}}},\,\,{\lambda '_5}(t) =  - \frac{{\partial H}}{{\partial {S_T}}},\,\,{\lambda '_6}(t) =  - \frac{{\partial H}}{{\partial {E_T}}},\,\,{\lambda '_7}(t) =  - \frac{{\partial H}}{{\partial {I_T}}}\]
With ${\lambda _i}(T) = 0\,\,,\,i\, = \,1,\,2,\,3,\,4,\,5,\,6,\,7$
The characterization of the optimal control is obtained by solving the equations
	\[\frac{{\partial H}}{{\partial {u_1}}} = {D_1}{u_1}(t) + {\lambda _1}[{B_1}{I_T}{S_B} + {B_2}{I_B}{S_B}] - {\lambda _2}[{B_1}{I_T}{S_B} + (1 - {u_1}){B_2}{I_B}{S_B}] = 0\]
	\[\frac{{\partial H}}{{\partial {u_2}}} = {D_2}{u_2}(t) + ({\lambda _4} - {\lambda _3}){I_B} = 0\]
         \[\frac{{\partial H}}{{\partial {u_3}}} = {D_3}{u_3}(t) - {\lambda _5}{\tau _T}{N_T} = 0\]
Solving for each of the optimal control we have,
\[u_1^* = \frac{{[{B_1}{I_T}{S_B} + {B_2}{I_B}{S_B}]({\lambda _2} - {\lambda _1})}}{{{D_1}}}\]
\[u_2^* = \frac{{({\lambda _3} - {\lambda _4}){I_B}}}{{{D_2}}}\]
\[u_3^* = \frac{{{\lambda _5}{\tau _T}{N_T}}}{{{D_3}}}\]	
Therefore, the optimal control $u_1^*,\,u_2^*,u_3^*$ exists and is characterized by the following: \\
\begin{equation}
\begin{split}
u_1^* & = \max \left\{ {0,\,\,\min \left( {{m_1},\,\frac{{({B_1}{I_T}{S_B} + {B_2}{I_B}{S_B})({\lambda _2} - {\lambda _1})}}{{{D_1}}}} \right)} \right\}\\
u_2^* & = \max \left\{ {0,\,\,\min \left( {{m_2},\,\frac{{({\lambda _3} - {\lambda _4}){I_B}}}{{{D_2}}}} \right)} \right\}\\
u_3^* & = \max \left\{ {0,\,\,\min \left( {{m_3},\,\frac{{({\lambda _5}{\tau _T}{N_T})}}{{{D_3}}}} \right)} \right\}
\end{split}
\end{equation} 
	
This implies that the optimal effort necessary to reduce avian Spirochaetosis disease is   
\begin{equation}
\begin{split}
u_1^* &= \frac{{[{B_1}{I_T}{S_B} + {B_2}{I_B}{S_B}]({\lambda _2} - {\lambda _1})}}{{{D_1}}}\\
u_2^* &= \frac{{({\lambda _3} - {\lambda _4}){I_B}}}{{{D_2}}}\\
u_3^* &= \frac{{{\lambda _5}{\tau _T}{N_T}}}{{{D_3}}}
\end{split}
\end{equation}
\end{proof}

\section{Results}
The objective of our numerical simulation experiments will be to better understand the dynamics involved in the Avian Spirochaetosis infection and the effect of control measures inclusion. Considering the estimated value of parameters in Table (\ref{tablea}),computation is done with MATLAB and the results are presented as follows:

\subsection*{Temporal dynamics of the Spirochaetosis model:}
This first simulation experiments (see Figure 2) reveals the temporal dynamics of the state variables using the base parameter values. Here we observe that on the long run; (i) the population of infected tick dies out completely while between the periods of $2$ to $4$ years the populations of the infected birds become evenly distributed, (ii)	the rate at which the infected birds become recovered is on a high increase, (iii) the exposed tick dies out while that of the bird population increases between the periods of $0$ to $16$ years and then attains a plateau.

\begin{figure}[h!]
\centering
\begin{tabular}{c}
\includegraphics[width=0.66\textwidth]{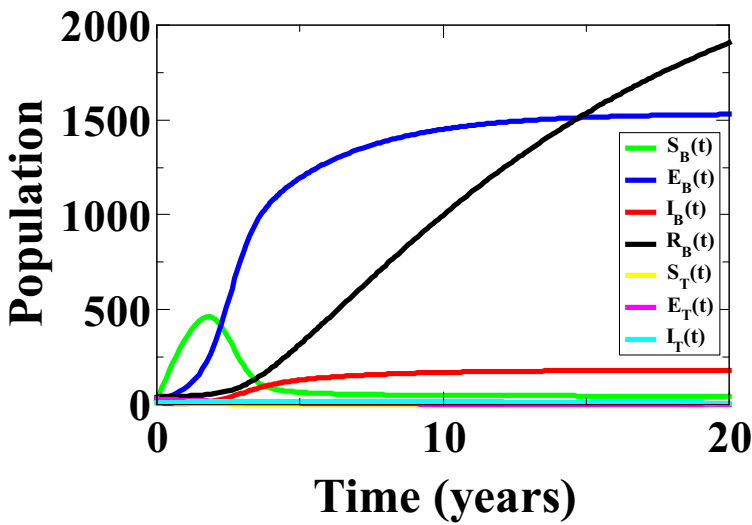}
\end{tabular}
\caption[Temporal dynamics of the Spirochaetosis model:]{\textbf{Temporal dynamics of the Spirochaetosis model:} Presented in this figure is the population dynamics of the state variables ($S_B, E_B, I_B, R, S_T, E_T, I_T$) using the base parameter values.}\label{evol1}
\end{figure}

\subsection*{Influence of parameter values on temporal dynamics of the exposed and infected birds and ticks population:}
In Figure 3, we studied the effect of varying the rate of growth and decay of the state variables.  We started by varying $\alpha_B$ and $d$ in order to investigate its impact on the population of infected birds in the model. The progression rate from exposed to infectious class among birds $\alpha_B$ is varied from $0.1$ to $0.5$ increasing by one order of magnitude. The results reveals that as the rate of progression $\alpha_B$ increases, the infectious bird population also increases as shown in the line plots in Figure \ref{evol1}a from black to red.  The second aspect of this simulation experiment is to vary the natural death rate $d$ from $0.5$ to $1.5$, increasing the values by $100\%$ as presented in Figure \ref{evol1}b. The results here show that increasing the death rate will significantly reduce the infectious class of birds. Following in Figure \ref{evol1}c, we study the population dynamics of infectious class of of ticks when the progression rate $\alpha_T$ from exposed to infectious class among the tick and the death rate of ticks $\delta$ are varied, this is presented in black and red lines respectively. The progression rate $\alpha_T$ is varied as $\{0.1, 0.2, 0.3 \}$ and the death rate $\delta$ is varied as $\{0.08, 0.16, 0.24, 0.32, 0.4\}$. There is a faster decline of the tick infectious class population when $\delta$ is decreased than when $\alpha_T$ is decreased. The final part in this set of simulation experiment investigates the effect of tick death rate $\delta$ on the population of the exposed class of ticks. The death rate $\delta$ is varied as $\{0.08, 0.16, 0.24, 0.32, 0.4\}$. Decreasing the death rate obviously decreases the exposure class of the tick population as shown in Figure \ref{evol1}d from black to red.

\begin{figure}[h!]
\centering
\begin{tabular}{cc}
\includegraphics[width=0.46\textwidth]{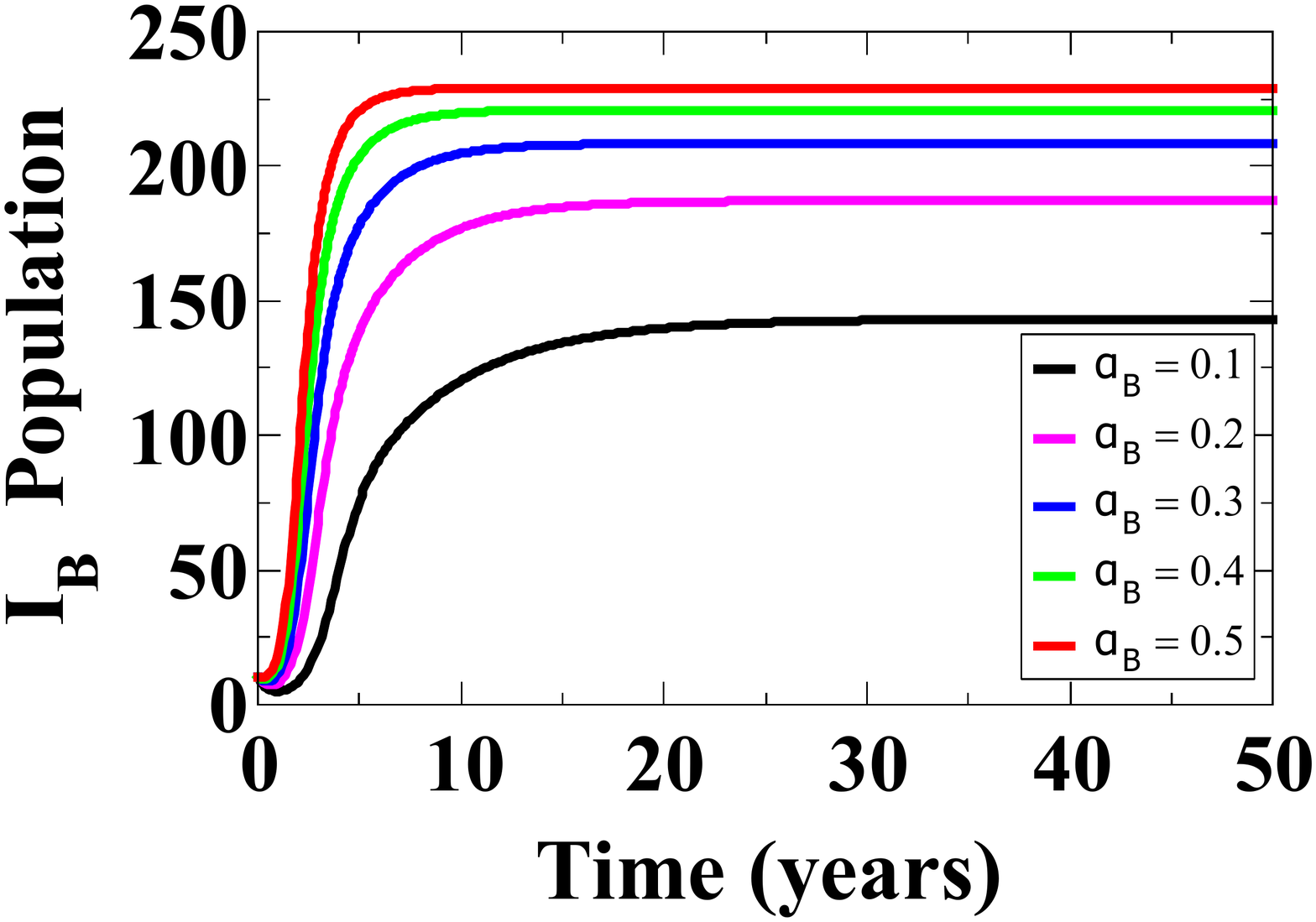}&
\includegraphics[width=0.46\textwidth]{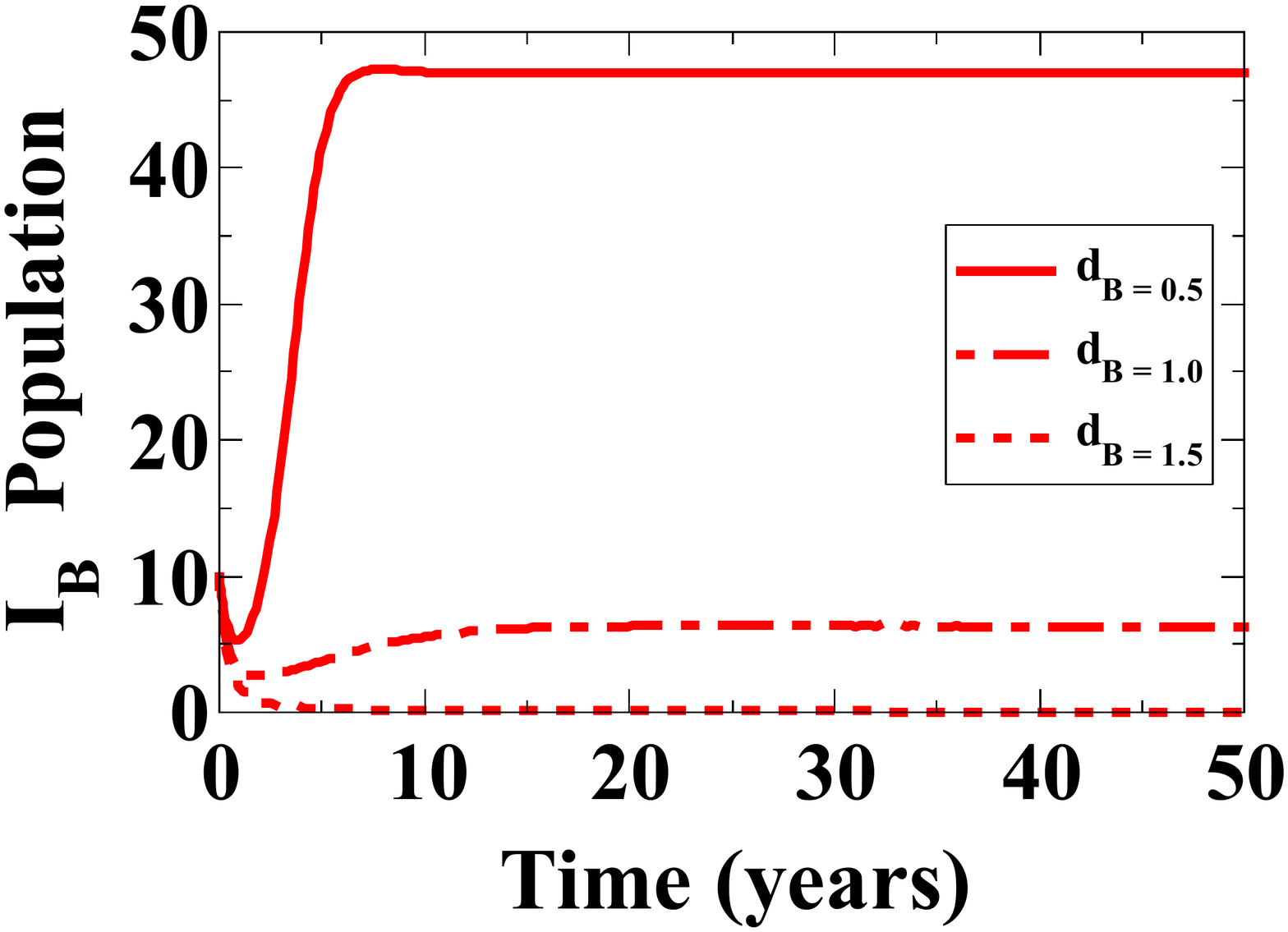}\\
(a) & (b)\\
\includegraphics[width=0.465\textwidth]{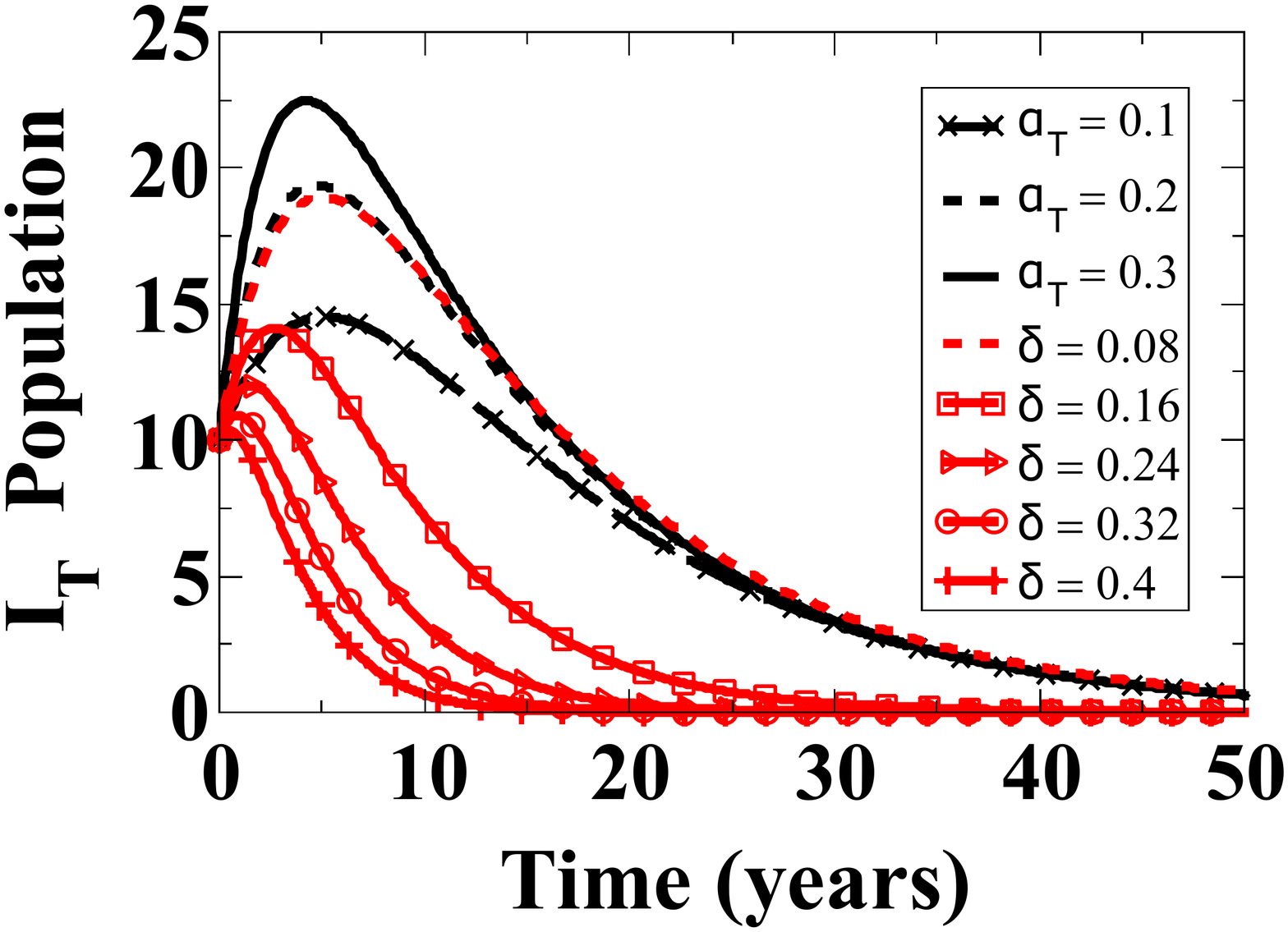}&
\includegraphics[width=0.46\textwidth]{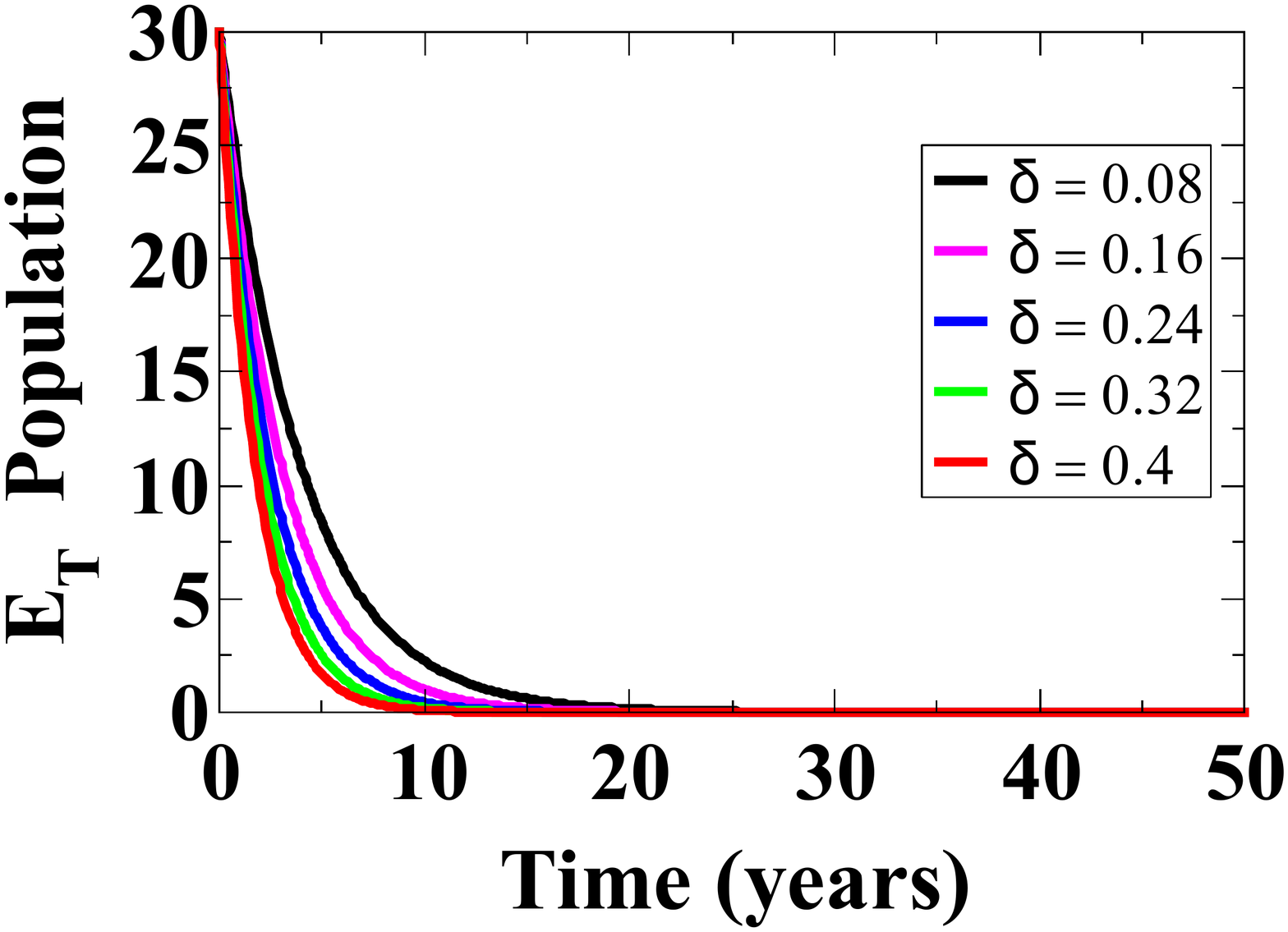}\\
(c) & (d)
\end{tabular}
\caption[Influence of parameter values on temporal dynamics of the exposed and infected birds and ticks population:]{\textbf{Influence of parameter values on temporal dynamics of the exposed and infected birds and ticks population:} Presented in this figure are (a)the population dynamics of infected birds when $\alpha_B$ is increased from $0.1$ to $0.5$ by one other of magnitude, (b)the population dynamics of infected birds when $\delta_B$ is increased from $0.5$ to $1.5$ by $100\%$, (c)the population dynamics of infected ticks when $\alpha_T$ and $\delta$ are varied as shown in black and red lines respectively, where $\alpha_T=\{0.1,0.2, 0.3\}$ and $\delta=\{0.08,0.16, 0.24,0.32,0.4\}$ (d)population dynamics of exposed ticks when $\delta$ is varied, where $\delta=\{0.08,0.16, 0.24,0.32,0.4\}$.}\label{evol1}
\end{figure}

\subsection*{Influence of parameter values on temporal dynamics of the Spirochaetosis control model:}
In Figure 4, we investigated the effect of varying the controls in the Spirochaetosis control model. The first results in this set of simulation experiment is the population dynamics of the state variables ($S_B, E_B, I_B, R, S_T, E_T, I_T$) using the base parameter values and the controls $u_1,u_2$ and $u_3$ (see Figure 4a). Here we observe that on the long run; (i) the population of infected tick dies out almost completely while between the periods of $2$ to $4$ years the populations of the infected birds become evenly distributed. These observations are very similar to the case without controls except for slight difference in order of magnitude.(ii)	the rate at which the infected birds become recovered goes to zero \textbf{( I am not sure if this makes sense)}, (iii) Similar to the case without controls, the exposed tick dies out while that of the bird population increases between the periods of $0$ to $16$ years and then attains a plateau.
The second results in this simulation experiment investigates the effect of varying the control $u_2$ on the population of the susceptible birds.
Here $u_2$ is varied as $\{0.2, 0.6, 1.0\}$ (see Figure 4b). The results show that increasing the value of $u_2$ gives the subsceptible birds population a chance of attaining a plateau after about 25-30 years. The next results, presented in Figure 4c reveals the population dynamics of the exposed class of birds when the control $u_1$ is varied as $\{0.02, 5, 20, 25\}$. Varying $u_1$ from $0.02$ to $4.8$ did not produce any significant difference, albeit the variations $\{5,20,25\}$ show some differences (see Figure 4c). Increasing the control $u_1$ value increases the rate at which $E_B$ decays and increases the rate at which it attains plateau at the long run. The final results in this set of simulation results is presented in Figure 4d, this is the population dynamics of the exposed class of birds when the control $u_2$ is varied as $\{0.3, 0.6, 0.9, 1.0, 1.2\}$ (\textbf{I don't really understand why this result (figure 4d) is like this, please check if this makes sense}).

\begin{figure}[h!]
\centering
\begin{tabular}{cc}
\includegraphics[width=0.465\textwidth]{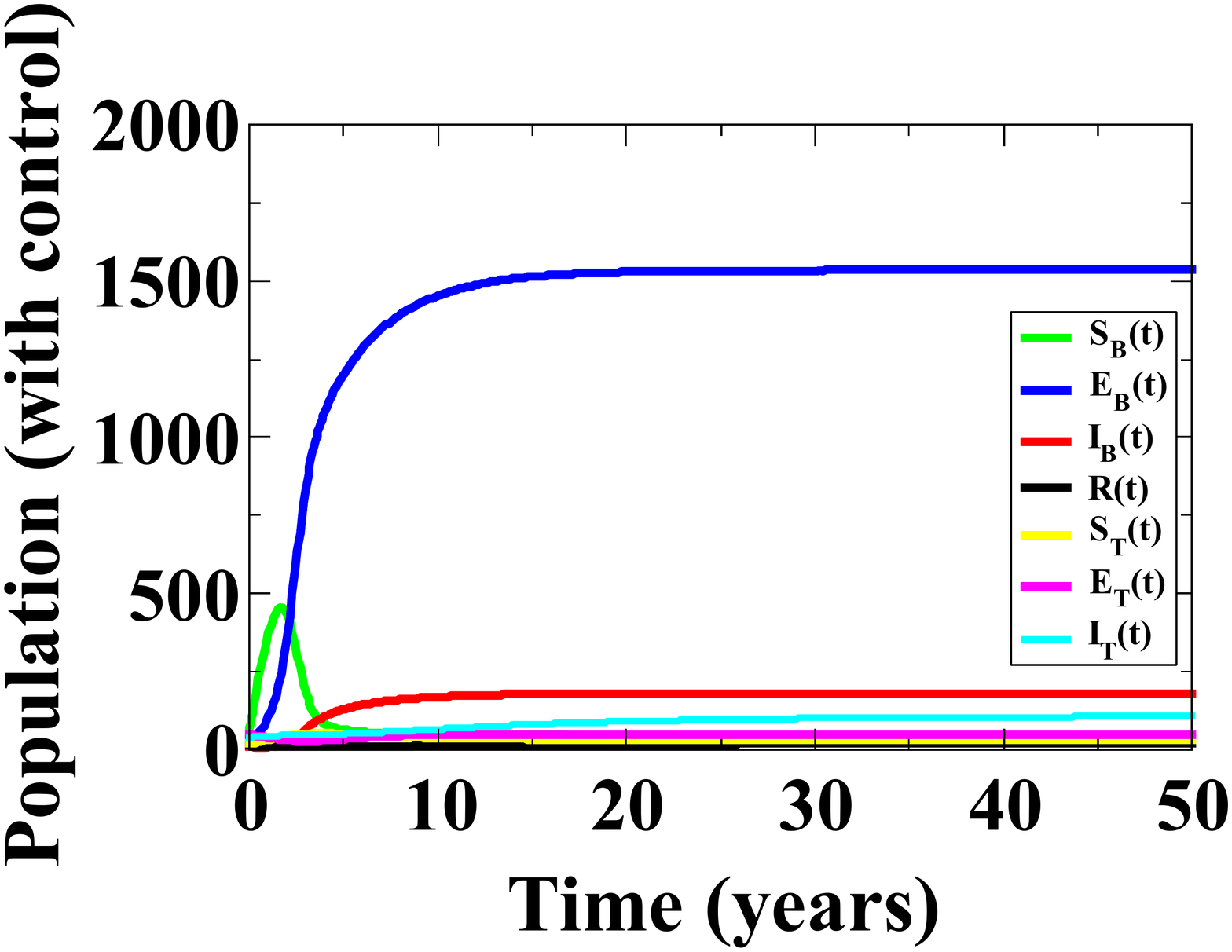}&
\includegraphics[width=0.46\textwidth]{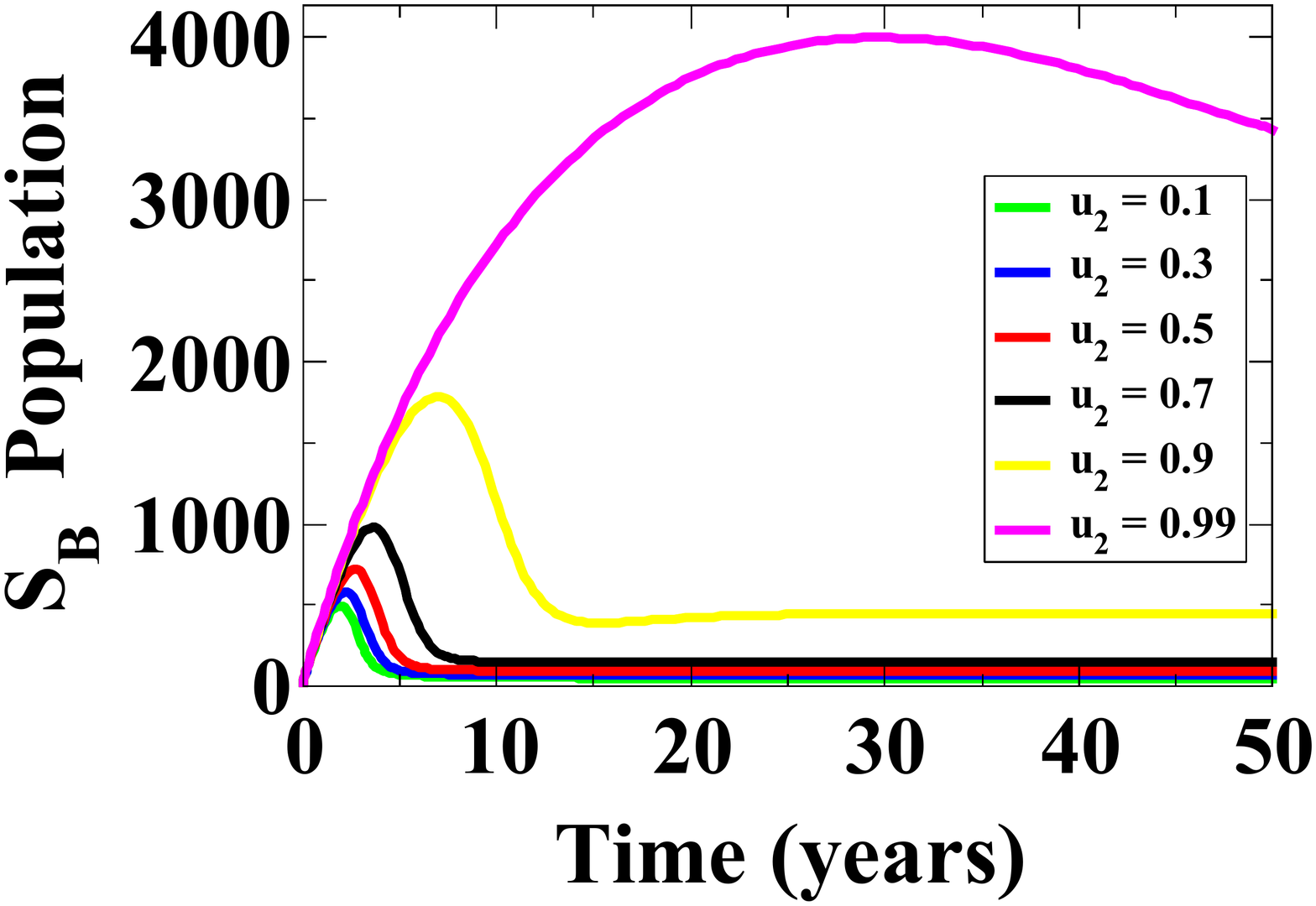}\\
(a) & (b)\\
\includegraphics[width=0.46\textwidth]{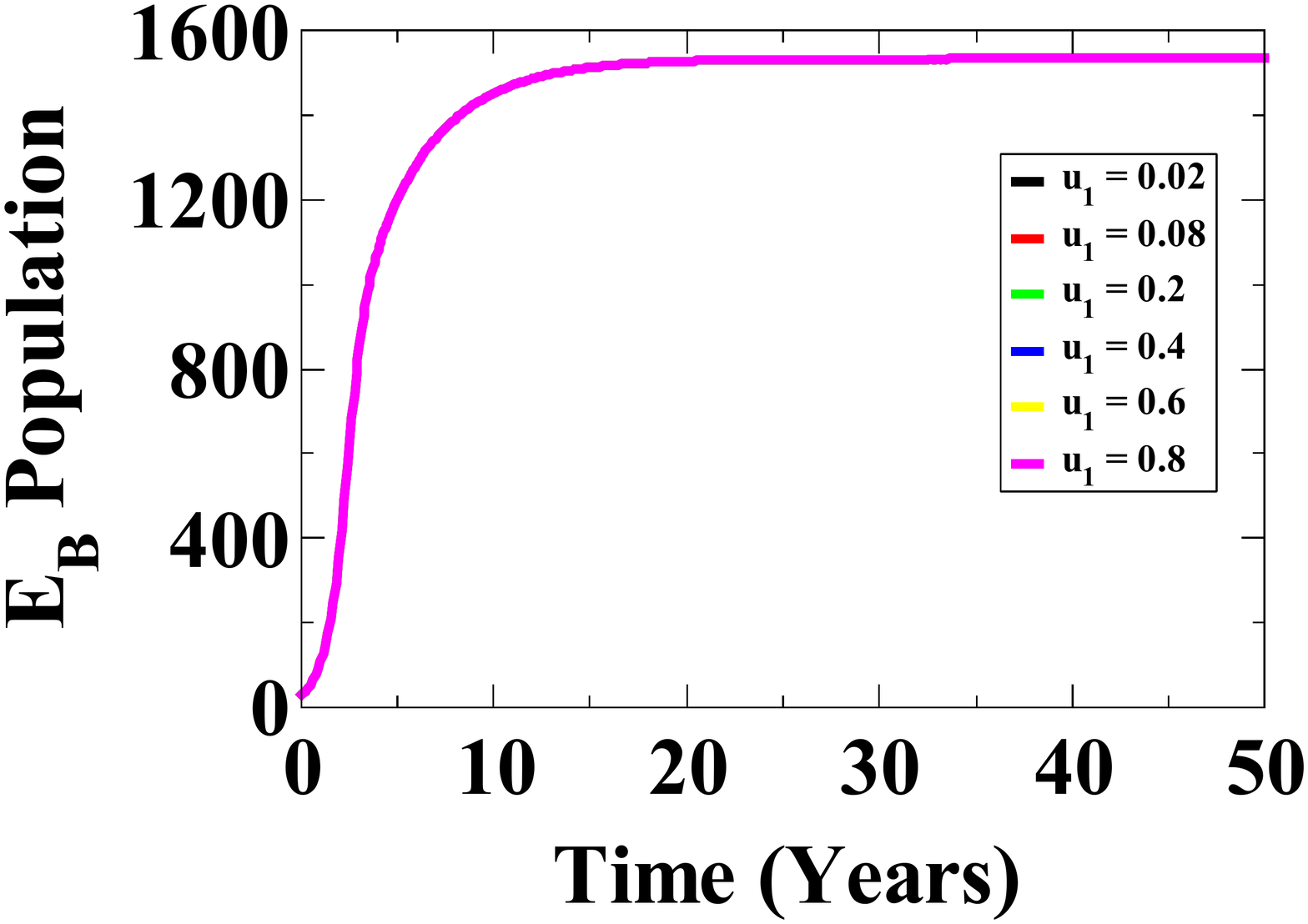}&
\includegraphics[width=0.46\textwidth]{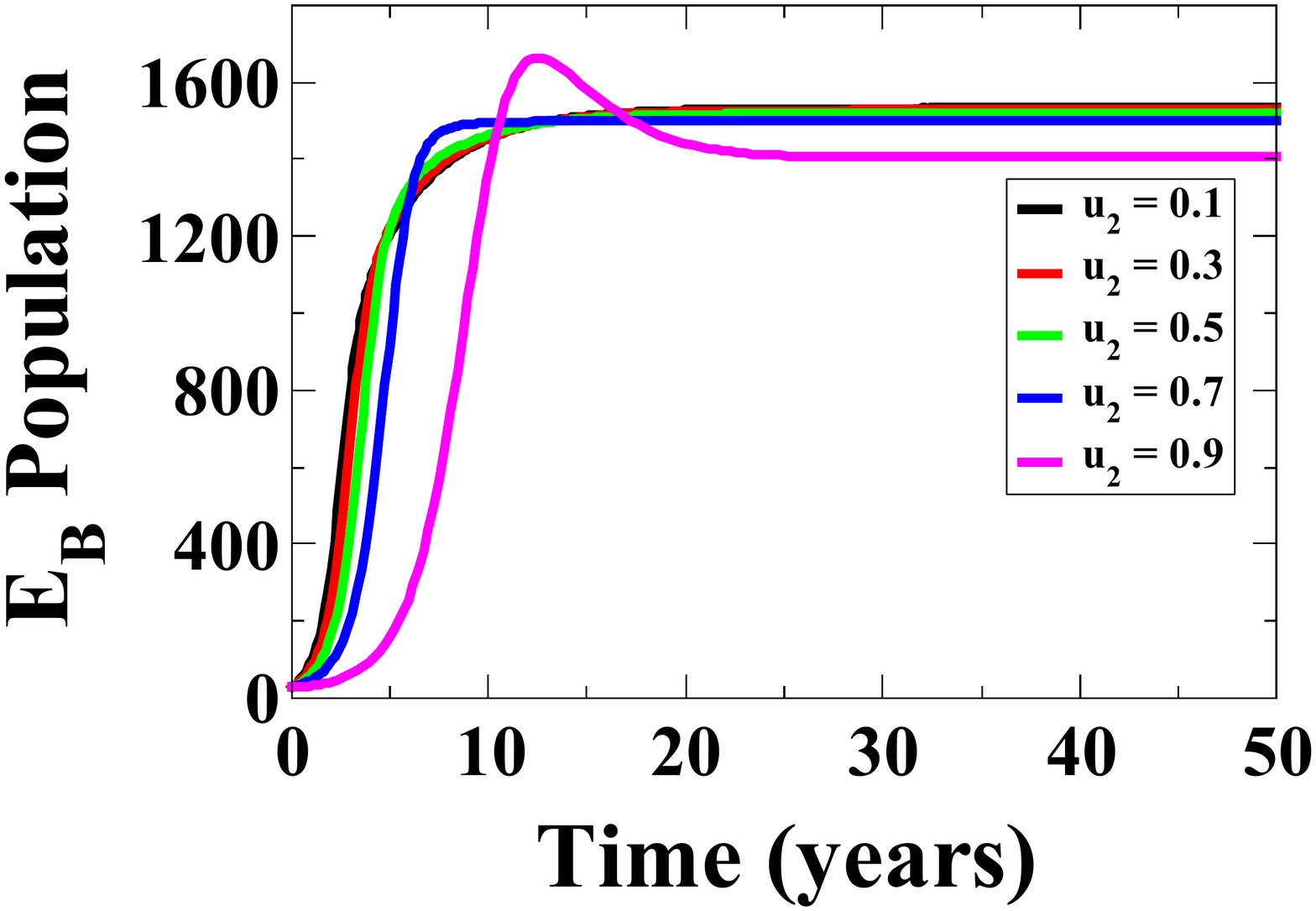}\\
 (c) & (d)
\end{tabular}
\caption[Influence of parameter values on temporal dynamics of the Spirochaetosis control model:]{\textbf{Influence of parameter values on temporal dynamics of the Spirochaetosis control model:} Presented in this figure are (a)population dynamics of the state variables ($S_B, E_B, I_B, R, S_T, E_T, I_T$) using the base parameter values and the controls $u_1,u_2$ and $u_3$. (b)the population dynamics of susceptible birds when $u_2$ is varied as $\{0.2, 0.6, 1.0\}$ which represented as black, red and blue lines respectively. (c)the population dynamics of the exposed class of birds $E_T$ when the control $u_1$ is varied as $\{0.02, 5, 20, 25\}$. (d)the population dynamics of the exposed class of birds when the control $u_2$ is varied as $\{0.3, 0.6, 0.9, 1.0, 1.2\}$.}\label{evol12}
\end{figure}

\subsection*{Comparing Recovered population with and without control:}
In Figure 5, we compare the temporal dynamics of recovered birds with and without the controls. The control case is presented in Figure 5a, where the control $u_2$ is varied as $\{0.08,0.48, 0.88\}$. Increasing the value of $u_2$ reduces the rate at which the recovered birds attain a plateau. On the other hand, the recovered birds increases in population in the absence of control irrespective of the choice of $\sigma$. 

\begin{figure}[h!]
\centering
\begin{tabular}{cc}
\includegraphics[width=0.46\textwidth]{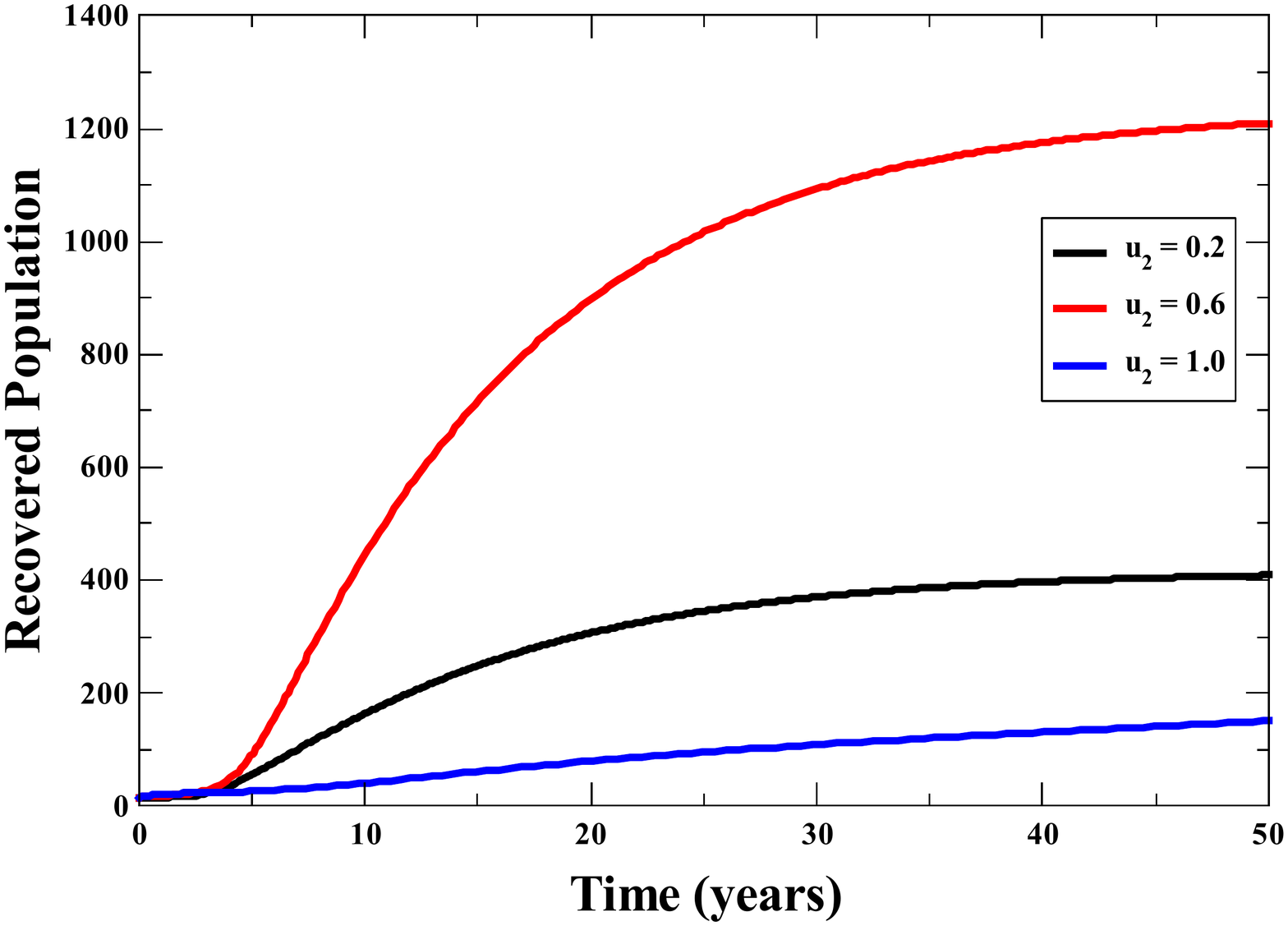}&
\includegraphics[width=0.46\textwidth]{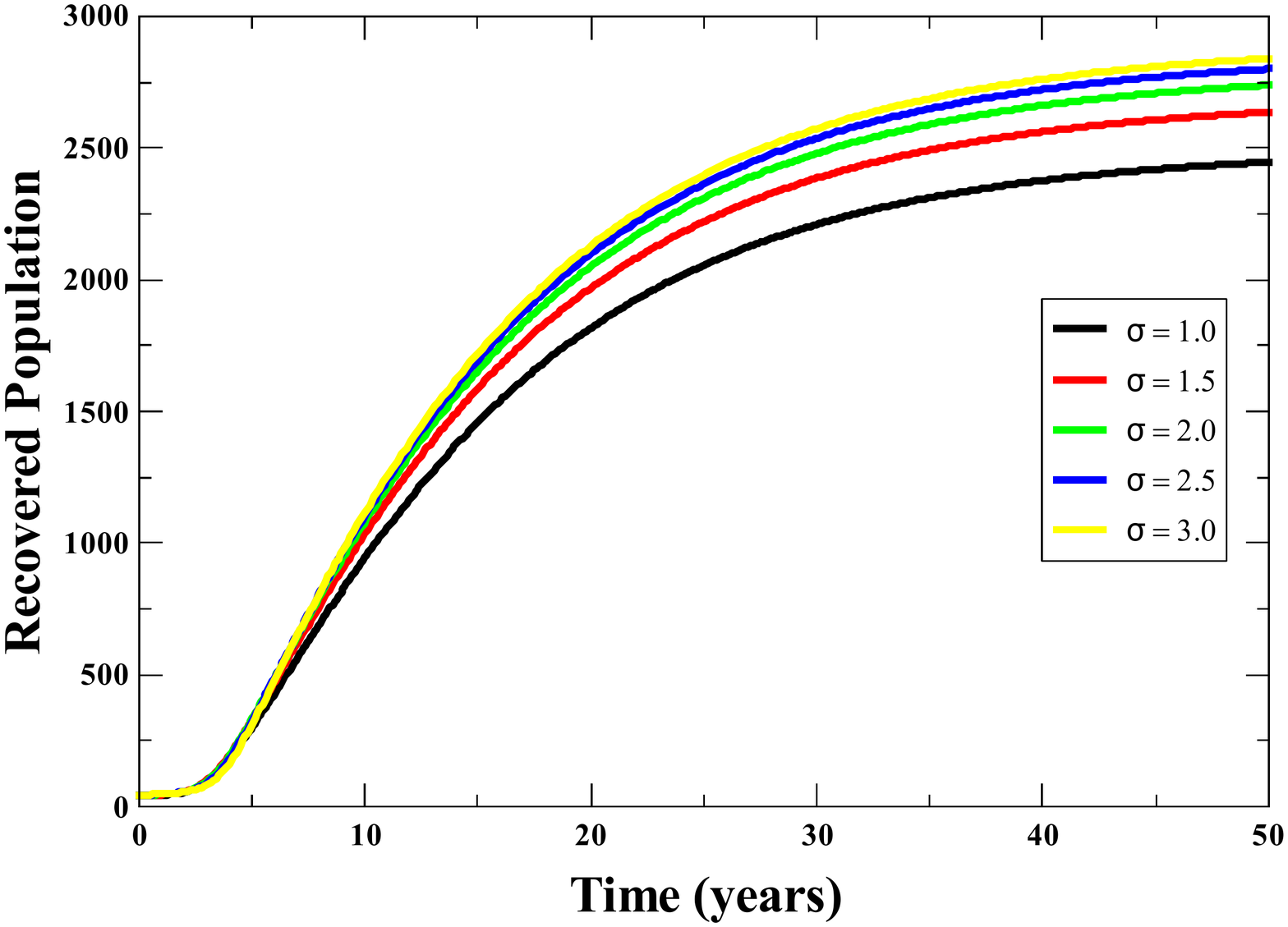}\\
(a) & (b)
\end{tabular}
\caption[Comparing Recovered population with and without control:]{\textbf{Comparing Recovered population with and without control:} Presented in this figure are (a)the temporal dynamics of recovered birds when the control $u_2$ is applied to the model. Here $u_2$ is varied as $\{0.08,0.48, 0.88\}$ (b)the temporal dynamics of recovered birds when there is no control in the model. Here $\sigma$ is variedas $\{1.0,1.5, 2.0,2.5,3.0\}$} \label{evol3}
\end{figure}

%
\section{Discussion}
\section{Conclusion and Recommendation}

\newpage
\newpage
\section{Appendix}

\subsection{Existence of steady states}
The system is in a steady state if, $\frac{d S_B}{dt}=\frac{d E_B}{dt}=\frac{d I_B}{dt}=\frac{d R_B}{dt}=\frac{d S_T}{dt}=\frac{d E_T}{dt}=\frac{d I_T}{dt}= 0$, that is, 

\begin{align}
\tau_BN_B - \beta_1I_TS_B - \beta_2I_BS_B - dS_B &=0 \label{i}\\
\beta_1I_TS_B - \beta_2I_BS_B - \alpha_BE_B - dE_B &=0 \label{ii} \\
\alpha_BE_B  - \sigma I_B - dI_B - \mu I_B &=0 \label{iii} \\
\sigma I_B - dR &=0 \label{iv} \\
\tau_TN_T - \beta_3I_BS_T - \lambda S_TI_T - \delta S_T &=0 \label{v} \\
\beta_3I_BS_T - \theta I_TS_T - \delta E_T - \alpha_TE_T &=0 \label{vi} \\
\alpha_TE_T - \delta I_T &=0 \label{vii}
\end{align}
Solving equations (\ref{i}) to (\ref{vii}) $S^0_B,E^0_B,I^0_T,R^0_B,S^0_T,E^0_T,I^0_T$ for  we have the following:
From (\ref{i}) we have  
$$
\Rightarrow \tau_BN_B = (\beta_1I_T + \beta_2I_B +d)S_B \Rightarrow S^0_B = \frac{\tau_BN^0_B}{(\beta_1I^0_T + \beta_2I^0_B + d)}
$$          
From (\ref{ii})
$$
\beta_1I_TS_B + B_2I_BS_B - \alpha_BE_B -dE_B=0\Rightarrow (\beta_1I^0_T + \beta_2I^0_B)S^0_B = (\alpha_B + d)E^0_B \Rightarrow E^0_B = \frac{\beta_1I^0_T + \beta_2I^0_B)S^0_B}{(\alpha_B + d)}
$$
From (\ref{iii})
$$
\alpha_BE_B + \sigma I_B - dI_B=0\Rightarrow \alpha_BE^0_B =(\sigma + d + \mu)I^0_B = 0 \Rightarrow E^0_B = \frac{(\sigma + d + \mu)I^0_B}{\alpha_B}
$$
From (\ref{iv}),               
$$
\sigma I_B - dR = 0 \Rightarrow \sigma I_B = dR \Rightarrow R^0_B = \frac{\sigma I^0_B}{d}
$$
From (\ref{v}) 
$$
S^0_T = \frac{\tau_TN^0_T}{(\beta_3 I^0_B + \theta I^0_T + \lambda I^0_T + \delta)}
$$
From (\ref{vi})
$$
\beta_3I_BS_T + \theta I_TS_T + \lambda S_TI_T - \delta E_T - \alpha_TE_T=0\Rightarrow (\beta_3I^0_B + \theta I^0_T + \lambda I^0_T)S^0_T = (\delta - \alpha_T)E^0_T 
$$
$$
S^0_T = \frac{\delta - \alpha_T)E^0_T}{(\beta_3I^0_B + \theta I^0_T + \lambda I^0_T)}
$$
From (\ref{vii}),  
$$
\alpha_TE_T - \delta I_T =0 \Rightarrow \alpha_TE_T = \delta I_T \Rightarrow I^0_T = \frac{\alpha_TE^0_T}{\delta}
$$

Equating (b) and (c) we have $E_B^0 = \frac{{({\beta _1}I_T^0 + {\beta _2}I_B^0)S_B^0}}{{({\alpha _B} + d)}} = \frac{{(\sigma  + d + \mu )I_B^0}}{{{\alpha _B}}}\,\,$                 
$$ \Rightarrow ({\beta _1}I_T^0 + {\beta _2}I_B^0)S_B^0{\alpha _B} = ({\alpha _B} + d)(\sigma  + d + \mu )I_B^0$$

$$({\alpha _B}{\beta _1}I_T^0 + {\alpha _B}{\beta _2}I_B^0)S_B^0 = ({\alpha _B} + d)(\sigma  + d + \mu )I_B^0$$

$${\alpha _B}{\beta _1}I_T^0S_B^0 = \left[ {({\alpha _B} + d)(\sigma  + d + \mu ) - {\alpha _B}{\beta _2}} \right]I_B^0$$

$$I_B^0 = \frac{{{\alpha _B}{\beta _1}I_T^0S_B^0}}{{\left[ {({\alpha _B} + d)(\sigma  + d + \mu ) - {\alpha _B}{\beta _2}} \right]}}$$

Equating (e) and (f) we have 
$S_T^0 = \frac{{{\tau _T}N_T^0}}{\begin{array}{l}
({\beta _3}I_B^0 + \theta I_T^0 + \lambda I_T^0 + \delta )\\

\end{array}}\,\,$= $\frac{{(\delta  - {\alpha _T})E_T^0}}{{({\beta _3}I_B^0 + \theta I_T^0 + \lambda I_T^0)}}\,\,\,\,$

$$\Rightarrow \tau_TN_T^0(\beta_3I_B^0 + \theta I_T^0 + \lambda I_T^0)= ({\beta _3}I_B^0 + \theta I_T^0 + \lambda I_T^0 + \delta )(\delta  - {\alpha _T})E_T^0$$

$$\therefore \,\,\,E_T^0 = \frac{{{\tau _T}N_T^0({\beta _3}I_B^0 + \theta I_T^0 + \lambda I_T^0)}}{{({\beta _3}I_B^0 + \theta I_T^0 + \lambda I_T^0 + \delta )(\delta  - {\alpha _T})}}$$

Therefore the steady (equilibrium) state is:
\begin{equation}
\left. \begin{array}{l}
S_B^0 = \frac{{{\tau _B}N_B^0\,}}{{({\beta _1}I_T^0 + {\beta _2}I_B^0 + d)}}\,\\
\\
\\
E_B^0 = \frac{{(\sigma  + d + \mu )I_B^0}}{{{\alpha _B}}}\,\\
\\
\\
\,I_B^0 = \frac{{{\alpha _B}{\beta _1}I_T^0S_B^0}}{{\left[ {({\alpha _B} + d)(\sigma  + d + \mu ) - {\alpha _B}{\beta _2}} \right]}}\,\,\\
\\
\\
R_B^0 = \frac{{\sigma I_B^0}}{d}\\
\\
\\
S_T^0 = \frac{{{\tau _T}N_T^0}}{{({\beta _3}I_B^0 + \theta I_T^0 + \lambda I_T^0 + \delta )}}\\
\\
\\
E_T^0 = \frac{{{\tau _T}N_T^0({\beta _3}I_B^0 + \theta I_T^0 + \lambda I_T^0)}}{{({\beta _3}I_B^0 + \theta I_T^0 + \lambda I_T^0 + \delta )(\delta  - {\alpha _T})}}\\
\\
\\
\,I_T^0 = \frac{{{\alpha _T}{E_T}}}{\delta }\,
\end{array} \right\}
\end{equation}

The disease free steady (equilibrium) state for the disease is${E_0} = (\frac{{{\tau _B}\mathop N\nolimits_B }}{d},0,0,0,\frac{{{\tau _T}\mathop N\nolimits_T }}{\delta },0,0)$

\end{document}